\newtheorem{theo+}              {Theorem}           [section]
\newtheorem{prop+}  [theo+]     {Proposition}
\newtheorem{coro+}  [theo+]     {Corollary}
\newtheorem{lemm+}  [theo+]     {Lemma}
\newtheorem{exam+}  [theo+]     {Example}
\newtheorem{rema+}  [theo+]     {Remark}
\newtheorem{defi+}  [theo+]     {Definition}
\def \r{\mbox{${\mathbb R}$}}
\newenvironment{theorem}{\begin{theo+}}{\end{theo+}}
\newenvironment{corollary}{\begin{coro+}}{\end{coro+}}
\newenvironment{lemma}{\begin{lemm+}}{\end{lemm+}}
\theoremstyle{plain} \theoremstyle{remark}
\newtheorem{remark}{Remark}
\newtheorem{example}{Example}
\def\E{/\kern-1.0em \equiv }
\title{Biharmonic maps from tori into a 2-sphere}
\author{Ze-Ping Wang }
\author{Ye-Lin Ou}
\author{Han-Chun Yang$^{*}$ }
\address{Department of Mathematics, \newline\indent Yunnan University,\newline\indent
Kunming 650091, P. R. China
\newline\indent E-mail:zeping.wang@gmail.com \;(Wang)\\\newline\indent E-mail: hyang@ynu.edu.cn\; (H. Yang)\\\newline\indent
\newline\indent Department of
Mathematics,\newline\indent Texas A $\&$ M
University-Commerce,\newline\indent Commerce, TX 75429
USA.\newline\indent E-mail:yelin.ou@tamuc.edu\; (Ou)}
\thanks{ *Research supported by the NSF of China (11361073)}
\begin{document}
\title[Biharmonic maps from tori into a 2-sphere]{Biharmonic maps from tori into a 2-sphere}
\subjclass{58E20, 53C12} \keywords{Biharmonic
maps, biharmonic tori, harmonic maps, Gauss maps, maps into a sphere.}
\date{06/18/2014}
\maketitle
\section*{Abstract}
\begin{quote}
{\footnotesize Biharmonic maps are generalizations of harmonic maps. A well-known result of Eells and Wood on harmonic maps between surfaces shows that there exists no
harmonic map from a torus into a sphere (whatever the metrics chosen) in the homotopy class of maps of Brower degree $\pm 1$. It would be interesting to know if there exists any biharmonic map in that homotopy class of maps. In this paper, we obtain some classifications on biharmonic maps from a torus into a sphere, where the torus is provided with a flat or a class of non-flat metrics whilst the sphere is provided with the standard metric. Our results show that there exists no proper biharmonic maps of degree $\pm 1$ in a large family of maps from a torus into a sphere. 
}
\end{quote}
\section{Introduction}
All objects including manifolds, tensor fields, and maps studied in this paper are supposed to be smooth.\\

{\em Harmonic maps} are maps $\varphi: (M,g)\longrightarrow (N,h)$ between Riemannian manifolds that minimize the energy functional
\begin{equation}\notag
E(\varphi)=\frac{1}{2}\int_\Omega |{\rm d}\varphi|^2v_g,
\end{equation}
where $\Omega$ is a compact domain of $ M$. Analytically, a harmonic map is a solution of a system of 2nd order PDEs
\begin{equation}\label{fhm}
\tau(\varphi) \equiv {\rm Tr}_g\nabla\,d \varphi=g^{ij}(\varphi^{\sigma}_{ij}-\Gamma^k_{ij}\varphi^{\sigma}_{k}+\bar{\Gamma}^{\sigma}_{\alpha\beta}\varphi^{\alpha}_{i}\varphi^{\beta}_{j})\frac{\partial}{\partial y^{\sigma}}
=0,
\end{equation}
where $\tau(\varphi)$ is called the tension field of
the map $\varphi$.\\

{\em Biharmonic maps} are generalizations of harmonic maps, which are maps\\ $\varphi: (M,g)\longrightarrow (N,h)$ between Riemannian manifolds that are critical points of the bienergy functional defined by
\begin{equation}\notag
E_{2}(\varphi)=\frac{1}{2}\int_\Omega |\tau(\varphi)|^2v_g,
\end{equation}
where $\Omega$ is a compact domain of $ M$. Biharmonic map equation is a system of 4th order nonlinear PDEs (\cite{Ji1})
\begin{equation}\label{BTF}
\tau_{2}(\varphi):={\rm
Trace}_{g}(\nabla^{\varphi}\nabla^{\varphi}-\nabla^{\varphi}_{\nabla^{M}})\tau(\varphi)
- {\rm Trace}_{g} R^{N}({\rm d}\varphi, \tau(\varphi)){\rm d}\varphi =0,
\end{equation}
where $R^{N}$ denotes the curvature operator of $(N, h)$ defined by
$$R^{N}(X,Y)Z=
[\nabla^{N}_{X},\nabla^{N}_{Y}]Z-\nabla^{N}_{[X,Y]}Z.$$

As a harmonic map is always a biharmonic map, we call a biharmonic map that is not harmonic a {\em proper biharmonic map}. \\

Since 2000, the study of biharmonic maps has been attracting growing interest of many mathematicians and it has become a popular topic of research with many interesting results. For some recent geometric study of general biharmonic maps, we refer the readers to \cite{BK}, \cite{BFO2}, \cite{BMO2}, \cite{LOn1}, \cite{MO}, \cite{NUG}, \cite{Ou1}, \cite{Ou4}, \cite{OL}, \cite{Oua}, \cite{WOY} and the references therein. For some recent progress on biharmonic submanifolds (i.e., submanifolds whose defining isometric immersions are biharmonic maps) see \cite{Ch1}, \cite{Ch2}, \cite{Ch3}, \cite{CI}, \cite{Ji2}, \cite{Ji3}, \cite{Di}, \cite{HV}, \cite{CMO1}, \cite{CMO2}, \cite{BMO1}, \cite{BMO3}, \cite{Ou3}, \cite{OT}, \cite{OW}, \cite{NU}, \cite{TO}, \cite{CM}, \cite{AGR}, \cite{Wh} and the references therein. For biharmonic conformal immersions and submersions see \cite{Ou2}, \cite{Ou5}, \cite{BFO1}, \cite{LO}, \cite{WO} and the references therein.\\

For harmonic maps between surfaces, a very interesting result proved by Eells and Wood in \cite{EW} states that {\bf there exists no
harmonic map from a torus $T^2$ into a sphere $S^2$ (whatever the metrics chosen) in the homotopy class of maps of Brower degree $\pm 1$}. It would be interesting to know if there exists any proper biharmonic map from a torus $T^2$ into a sphere $S^2$ (whatever the metrics chosen) in the homotopy class of maps of Brower degree $\pm 1$. Motivated by this, we study biharmonic maps from a torus into a sphere in this paper. We are able to obtain some classifications of proper biharmonic maps in a large family of maps from $T^2$ into $S^2$ which include the Gauss map of the torus $T^2\longrightarrow \r^3$ and the compositions $T^2\longrightarrow S^3\longrightarrow S^2$ of some immersions of $T^2$ into $S^3$ followed by the Hopf fibration. Here, the torus is provided with a flat or a class of non-flat metrics whilst the sphere is provided with the standard metric (Theorem \ref{T2S2} and Theorem \ref{NTS}).\\
\section{Constructions of maps from a torus into a $2$-sphere} 
In order to study biharmonic maps from a torus $T^2$ into a sphere $S^2$ we need to have a good source of maps from a torus into a sphere. In this section, we present three ways to construct maps from $T^2$ into $S^2$.
\begin{itemize}
\item [1.] {\em Construction via Hopf fibration}. For any map $f: T^2\longrightarrow S^3$, we have a map from torus into $2$-sphere, $H\circ f:T^2\longrightarrow S^2$, where $H:S^3\longrightarrow S^2$ is the Hopf fibration. Here, we view the Hopf fibration as the restriction of the Hopf construction of the standard multiplication of complex numbers, i.e., $H: \mathbb{C}\times \mathbb{C}\longrightarrow \r\times \mathbb{C}$ with $H(z, w)=(|z|^2-|w|^2, 2z\bar{w})$.
\item [2.] {\em Construction via Radial Projection}. For any map $f: T^2\longrightarrow \r^3\setminus\{0\}$, we have a map from torus into $2$-sphere, $P\circ f:T^2\longrightarrow S^2$, where $P:\r^3\setminus\{0\}\longrightarrow S^2$ with $P(x)=\frac{x}{|x|}$ is the radial projection from $\r^3\setminus\{0\}$ onto $S^2$.
\item [3.] {\em Construction via Gauss map of a torus}. It is well known that if $f: T^2\longrightarrow \r^3$ is an immersion of a torus into $\r^3$, then the Gauss map gives a map from the torus into a $2$-sphere defined by $G:T^2\longrightarrow S^2$, $G(x)=n(x)$ with $n(x)$ being the unit normal vector at the point $x\in T^2$.
\end{itemize}
\begin{example}
Let $f: T^2\longrightarrow S^3$ with $f(x,y)=(\cos (kx) e^{imy} , \sin( kx)e^{iny} )$ be a family of immersions studied by Lawson in \cite{La} . Then, the composition $\varphi=H\circ f: T^2\longrightarrow S^2$ gives a family of maps from a torus into a $2$-sphere defined by
\begin{eqnarray}\notag
\varphi(x,y)&=&\big(\cos^2 (kx)-\sin^2(kx) ,\; 2\cos (kx)\sin (kx) e^{i(m-n)y}\big)\\
&=&\big( \cos (2kx),\; \sin (2kx) e^{i(m-n)y} \big).
\end{eqnarray}
If we use geodesic polar coordinates $(\rho, \phi)$ on the $2$-sphere, then this family of maps can be represented as:
\begin{eqnarray}\notag
\varphi=H\circ f:&&T^2\longrightarrow (S^2, d\rho^2+\sin^2\rho d\phi^2), \; \varphi(x,y)=(\rho(x,y),\phi(x,y)),\; {\rm with}\\\label{Map1}
&& \begin{cases}
\rho(x,y)=2kx, \\\phi(x,y)=(m-n)y.
\end{cases}
\end{eqnarray}
\end{example}
\begin{example}
Let $f: S^1\times S^1\longrightarrow \r^4$ be a family of immersions of flat tori into $\r^4$ defined by
$f(x,y)=(Ae^{ix} , B e^{iy} )$ with constants $A, B$ satisfying $A^2+B^2\ne 0$. Postcomposing this map with the radial projection $P:\r^4\longrightarrow S^3, P(x)=\frac{x}{|x|}$ we have a family of map $F=P\circ f: S^1\times S^1\longrightarrow S^3$ with
$F(x,y)=(\frac{A}{\sqrt{A^2+B^2}}e^{ix} , \frac{B}{\sqrt{A^2+B^2}} e^{iy} )$. If we denote $\frac{A}{\sqrt{A^2+B^2}}=\cos s$, then $\frac{B}{\sqrt{A^2+B^2}}=\sin s$, then the family of the maps can be written as $F_s: T^2\longrightarrow S^3$ with
$F_s(x,y)=(e^{ix} \cos s , e^{iy} \sin s )$. Applying construction via Hopf fibration we have a map from $T^2$ into $S^2$:
\begin{eqnarray}\notag
\varphi=H\circ f:&&T^2\longrightarrow (S^2, d\rho^2+\sin^2\rho d\phi^2), \varphi(x,y)=(\rho(x,y), \phi(x,y)), {\rm with}\\\label{Map2}
&& \begin{cases}
\rho(x,y)=2s,\; s={\rm constant}, \\\phi(x,y)=x-y,
\end{cases}
\end{eqnarray}
where $(\rho, \phi)$ are the geodesic polar coordinates on $S^2$.
\end{example}
\begin{remark}
As it was observed in \cite{BW} (Example 3.3.18) that except $s=0, \pi/2$, the family of maps $F_s$ are embeddings of tori into $S^3$ and all these maps are harmonic maps with constant energy density called eigenmaps.
\end{remark}
\begin{example}
Let $f: S^1\times S^1\longrightarrow S^3$ with $f(x,y)=(\sqrt{1-\beta^2(x)} \;e^{ix} , \beta(x)e^{iy} )$, where $\beta(x)$ is a smooth function taking value between $0$ and $1$, be the family of immersions of flat tori into $S^3$ studied by Brendle in \cite{Br}. Introducing the new variable by denoting $\cos \alpha (x)=\sqrt{1-\beta^2(x)}$, we have $f(x,y)=(e^{ix}\cos \alpha(x ) , e^{iy}\sin \alpha(x) )$, Postcomposing this map with the Hopf fibration $H:S^3\longrightarrow S^2$, we have a family of map $F=H\circ f: S^1\times S^1\longrightarrow S^3$ with
$F(x,y)=(\cos 2\alpha(x), e^{i(x-y)}\sin 2\alpha(x) )$. If we use the geodesic polar coordinates $(\rho, \phi)$ on the $2$-sphere, then, this family of maps can be represented as:
\begin{eqnarray}\notag
\varphi=H\circ f:&&T^2\longrightarrow (S^2, d\rho^2+\sin^2\rho d\phi^2), \varphi(x,y)=(\rho(x,y), \phi(x,y)), {\rm with}\\\label{Map3}
&& \begin{cases}
\rho(x,y)=2\alpha(x), \\\phi(x,y)=x-y.
\end{cases}
\end{eqnarray}
\end{example}
\begin{example}\label{Ex4}
It was proved in \cite{Ou4} that the map $f:S^1\times S^1\longrightarrow S^3$ with
\begin{eqnarray}
f(x,y)=&&\big(\cos^2\frac{x+y}{2},\;\;\sin \frac{x+y}{2}\,\cos
\frac{x+y}{2},\,-\sin \frac{x+y}{2}\,\sin \frac{x-y}{2},\\\notag
&&-\sin \frac{x+y}{2}\,\cos \frac{x-y}{2}\big),
\end{eqnarray}
is a biharmonic map from a flat torus into the $3$-sphere. It can be easily checked that, up to an isometry, the map can be written as\\
$f(x,y)=\big(e^{i\frac{x+y}{2}}\cos\frac{-(x+y)}{2} ,\;e^{i\frac{x-y}{2}}\sin \frac{-(x+y)}{2}\big)$. Applying construction via Hopf fibration we have a map from $T^2$ into $S^2$:
\begin{eqnarray}\notag
\varphi=H\circ f:&&T^2\longrightarrow (S^2, d\rho^2+\sin^2\rho d\phi^2), \varphi(x,y)=(\rho(x,y), \phi(x,y)), {\rm with}\\\label{Map4}
&& \begin{cases}
\rho(x,y)=-x-y, \\\phi(x,y)=y,
\end{cases}
\end{eqnarray}
where $(\rho, \phi)$ are geodesic polar coordinates in $S^2$.
\end{example}
\begin{example}\label{Ex5}
Let $X: T^2 \longrightarrow \r^3$ be the standard embedding $X(r, \theta)=( a\sin r,\; (b+a\cos r)\cos \theta,\; (b+a\cos r)\sin \theta)$ of a torus into $\r^3$. A straightforward computation gives the induced metric and the unit normal vector field of the torus to be
\begin{equation}\notag
g_{T}=a^2dr^2+(b+a\cos r)^2d\theta^2,
\end{equation}
and
\begin{eqnarray}\notag
n=\frac{X_r\times X_\theta }{|X_r\times X_\theta|}= ( \sin r,\; \cos r\cos \theta,\; \cos r\sin \theta).
\end{eqnarray}
respectively. If we use the geodesic polar coordinates $(\rho, \phi)$ on the unit sphere so that a generic point $(x, y, z)\in S^2$ is represented as $(x, y, z)=(\sin \rho, e^{i\phi}\cos \rho )$, then, the Gauss map of the torus can be written as
\begin{eqnarray}\notag
\varphi: &&(T^2, a^2dr^2+(b+a\cos r)^2d\theta^2) \longrightarrow (S^2, d\rho^2+\cos^2 \rho d\phi^2)\\\notag &&\varphi(r,\theta)=(\rho(r, \theta), \phi(r,\theta)) \;{\rm with}\\\label{Map5}
&& \begin{cases}
\rho(r, \theta)=r, \\\phi(r, \theta)=\theta.
\end{cases}
\end{eqnarray}
\end{example}
\begin{example}
Let $X: T^2 \longrightarrow \r^3$ be the standard embedding $X(r, \theta)=( a\sin r,\; (b+a\cos r)\cos \theta,\; (b+a\cos r)\sin \theta)$ of a torus into $\r^3$. Using the construction via radial projection $P:\r^3\setminus\{0\}\longrightarrow S^2$ with $P(x)=\frac{x}{|x|}$, we have a family of maps from tori into a $2$-sphere given by $\varphi=P\circ X:T^2\longrightarrow S^2$, with
\begin{eqnarray}
\varphi (r, \theta)=\big( \cos \alpha (r),\; \sin\alpha(r)\cos \theta,\; \sin\alpha(r)\sin \theta\big),
\end{eqnarray}
where $\arccos \alpha(r)=\frac{a\sin r}{\sqrt{a^2+b^2+2ab\cos r}}$.
Again, with respect to the geodesic polar coordinates $(\rho, \phi)$ on the unit sphere so that a generic point $(x, y, z)\in S^2$ is represented as $(x, y, z)=(\cos \rho, e^{i\phi}\sin \rho )$, then, this family of maps from tori into $S^2$ can be written as:
\begin{eqnarray}\notag
\varphi: &&(T^2, a^2dr^2+(b+a\cos r)^2d\theta^2) \longrightarrow (S^2, d\rho^2+\sin^2 \rho d\phi^2)\\\notag &&\varphi(r,\theta)=(\rho(r, \theta), \phi(r,\theta)) \;{\rm with}\\\label{Map6}
&& \begin{cases}
\rho(r, \theta)=\alpha (r) , \\\phi(r, \theta)=\theta,
\end{cases}
\end{eqnarray}
which are rotationally symmetric maps.
\end{example}

The 6 families of maps given in Examples 1-6 lead us to study the biharmonicity of the following family of maps
\begin{eqnarray}\label{MF}
\varphi: T^2\longrightarrow (S^2, d\rho^2+\sin^2\rho d\phi^2),\\\notag
\varphi (r, \theta)=(ar+b\theta+c, mr+n\theta+l).
\end{eqnarray}

Clearly, the family of maps defined by (\ref{MF}) includes families of maps defined in (\ref{Map1}), (\ref{Map2}), (\ref{Map4}), (\ref{Map5}), and parts of the families given in (\ref{Map3}) and (\ref{Map6}). Our main results in this paper include a complete classifications of proper biharmonic maps in the family of maps defined by (\ref{MF}), where the torus is provided with a flat or a non-flat metric whilst the sphere is provided with the standard metric (Theorem \ref{T2S2} and Theorem \ref{NTS}).
\section{Biharmonic maps from a flat torus into a $2$-sphere}
\begin{lemma}\cite{OL}\label{OL}
Let $\varphi :(M^{m}, g)\longrightarrow (N^{n}, h)$ be a map between
Riemannian manifolds with $\varphi (x^{1},\ldots, x^{m})=(\varphi^{1}(x),
\ldots, \varphi^{n}(x))$ with respect to local coordinates $(x^{i})$ in
$M$ and $(y^{\alpha})$ in $N$. Then, $\varphi$ is biharmonic if and
only if it is a solution of the following system of PDE's
\begin{eqnarray}\notag\label{BI3}
&& \Delta\tau^{\sigma} +2g(\nabla\tau^{\alpha},
\nabla\varphi^{\beta}) {\bar \Gamma_{\alpha\beta}^{\sigma}}
+\tau^{\alpha}\Delta \varphi ^{\beta}{\bar
\Gamma_{\alpha\beta}^{\sigma}}\\ && +\tau^{\alpha}
g(\nabla\varphi^{\beta}, \nabla\varphi^{\rho})(\partial_{\rho}{\bar
\Gamma_{\alpha\beta}^{\sigma}}+{\bar
\Gamma_{\alpha\beta}^{\nu}}{\bar \Gamma_{\nu\rho}^{\sigma}})
-\tau^{\nu}g(\nabla\varphi^{\alpha}, \nabla\varphi^{\beta}){\bar
R}_{\beta\,\alpha \nu}^{\sigma}=0,\\\notag && \sigma=1,\, 2,\,
\ldots, n,
\end{eqnarray}
where $\tau^1, \ldots, \tau^n$ are components of the tension field
of the map $\varphi$, $\nabla,\;\Delta$ denote the gradient and the
Laplace operators defined by the metric $g$, and
${\bar\Gamma_{\alpha\beta}^{\sigma}}$ and ${\bar R}_{\beta\,\alpha
\nu}^{\sigma}$ are the components of the connection and the
curvature of the target manifold.
\end{lemma}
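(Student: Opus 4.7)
The plan is to start from the coordinate-free form of the bitension field given in (\ref{BTF}) and unpack it into local coordinates on $M$ and $N$. Writing $\tau(\varphi) = \tau^{\sigma}(\partial_\sigma \circ \varphi)$, the entire identity comes down to identifying the coefficient of each $\partial_\sigma$ in $\tau_2(\varphi)$ and setting it to zero. The two ingredients to expand are the rough Laplacian $\mathrm{Trace}_g(\nabla^\varphi\nabla^\varphi - \nabla^\varphi_{\nabla^M})\tau(\varphi)$ and the curvature term $\mathrm{Trace}_g R^N(d\varphi,\tau(\varphi))d\varphi$.

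For the rough Laplacian I would first compute $\nabla^\varphi_{\partial_i}\tau(\varphi)$ using the defining property of the pullback connection: $\nabla^\varphi_X(V\circ\varphi) = (\nabla^N_{d\varphi(X)}V)\circ\varphi$ for vector fields $V$ on $N$. This gives $\nabla^\varphi_{\partial_i}\tau = (\partial_i\tau^{\sigma})\partial_\sigma + \tau^{\alpha}\varphi^{\beta}_i \bar\Gamma^{\sigma}_{\alpha\beta}\partial_\sigma$. Applying $\nabla^\varphi_{\partial_j}$ a second time, then subtracting $\nabla^\varphi_{\nabla^M_{\partial_i}\partial_j}\tau$ (which produces the $-\Gamma^k_{ij}$ terms building the $M$-Laplacian) and contracting with $g^{ij}$ regroups the output into the sum of (i) the ordinary Laplacian $\Delta\tau^{\sigma}$, (ii) a cross term $2g(\nabla\tau^{\alpha},\nabla\varphi^{\beta})\bar\Gamma^{\sigma}_{\alpha\beta}$, (iii) the term $\tau^{\alpha}\Delta\varphi^{\beta}\bar\Gamma^{\sigma}_{\alpha\beta}$ coming from the second derivatives hitting $\varphi^{\beta}$, and (iv) the quadratic term $\tau^{\alpha} g(\nabla\varphi^{\beta},\nabla\varphi^{\rho})(\partial_{\rho}\bar\Gamma^{\sigma}_{\alpha\beta} + \bar\Gamma^{\nu}_{\alpha\beta}\bar\Gamma^{\sigma}_{\nu\rho})$ generated by differentiating the Christoffel symbols along $\varphi$ and composing connections.

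The curvature term is the cleaner piece: using $R^N(\partial_\beta,\partial_\alpha)\partial_\rho = \bar R^{\sigma}_{\beta\alpha\rho}\partial_\sigma$ and the pullback relations $d\varphi(\partial_i)=\varphi^{\alpha}_i\partial_\alpha\circ\varphi$, the contraction with $g^{ij}$ yields precisely $\tau^{\nu} g(\nabla\varphi^{\alpha},\nabla\varphi^{\beta})\bar R^{\sigma}_{\beta\alpha\nu}$ as the $\partial_\sigma$-coefficient. Subtracting this from the rough Laplacian and matching coefficient-by-coefficient with the seven terms in the displayed PDE completes the identification.

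The main obstacle is bookkeeping: one must be careful that the substitution $\nabla^\varphi_X(\partial_\sigma\circ\varphi)=\bar\Gamma^{\nu}_{\sigma\beta}\varphi^{\beta}(X)\partial_\nu$ is applied consistently, and that when $\nabla^\varphi_{\partial_j}$ is taken a second time the two types of contributions -- one hitting the scalar coefficients $\tau^{\alpha}$ (producing the cross term) and one hitting the $\partial_\sigma\circ\varphi$ (producing the $\partial_\rho\bar\Gamma$ and $\bar\Gamma\bar\Gamma$ contributions) -- are tracked with the correct index symmetries. Once the indices are arranged to match those in the statement and the trace with $g^{ij}$ is performed, no further analytic input is needed; the identity is algebraic and the lemma follows.
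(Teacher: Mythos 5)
Your proposal is correct, but note that the paper itself gives no proof of this lemma: it is quoted with the citation \cite{OL} (Ou--Lu, \emph{Biharmonic maps in two dimensions}), so there is no in-paper argument to compare against. Your derivation---expanding $\tau_2(\varphi)$ from (\ref{BTF}) in local coordinates via the pullback connection, with the two cross terms combining by index renaming into the factor-$2$ term, the bare second derivatives $\varphi^{\beta}_{ij}$ merging with the $-\Gamma^k_{ij}\varphi^{\beta}_k$ correction to form $\Delta\varphi^{\beta}$, and the chain rule on $\bar\Gamma^{\sigma}_{\alpha\beta}\circ\varphi$ together with the iterated connection producing the $\partial_{\rho}\bar\Gamma^{\sigma}_{\alpha\beta}+\bar\Gamma^{\nu}_{\alpha\beta}\bar\Gamma^{\sigma}_{\nu\rho}$ piece---is the standard proof and reproduces exactly the five groups of terms in (\ref{BI3}), matching what the cited source does.
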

In order to prove our classification theorem we need the following lemma.
\begin{lemma}\label{PL}
Let $\varphi:(M^2,dr^2+\sigma^2(r)d\theta^2)\longrightarrow
(N^2,d\rho^2+\lambda^2(\rho)d\phi^2)$ with $\varphi(r,\theta)=(ar+b\theta+c, mr+n\theta+l)$
. Then, $\varphi$ is biharmonic if and only if it solves the system
\begin{equation}\label{p1}
\begin{cases}
\frac{x_{\theta\theta}}{\sigma^2}+x_{rr}+\frac{\sigma'}{\sigma}x_r-(m^2+\frac{n^2}{\sigma^2})\left(\lambda\lambda'(\rho)\right)'(\rho)
x-\left(2my_r+\frac{2n}{\sigma^2}y_\theta+y^2\right)\lambda\lambda'(\rho)=0,\\
\frac{y_{\theta\theta}}{\sigma^2}+y_{rr}+\frac{\sigma'}{\sigma}y_{r}+2\left(ay_{r}
+mx_r+\frac{by_\theta}{\sigma^2}
+\frac{nx_\theta}{\sigma^2}\right)\frac{\lambda'(\rho)}{\lambda}\\
+2\left(\frac{m\sigma'\lambda'(\rho)}{\sigma\lambda}
+(am+\frac{bn}{\sigma^2})\frac{(\lambda\lambda'(\rho))'(\rho)}{\lambda^2}\right)x=0,\\
x=\tau^1=a\frac{\sigma'}{\sigma}-(m^2+\frac{n^2}{\sigma^2})\lambda\lambda'(\rho),\\
y=\tau^2=m\frac{\sigma'}{\sigma}+2(am+\frac{bn}{\sigma^2})\frac{\lambda'(\rho)}{\lambda}.
\end{cases}
\end{equation}
\end{lemma}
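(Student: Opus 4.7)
The plan is to apply Lemma \ref{OL} directly, with $m=n=2$, target coordinates $(y^{1},y^{2})=(\rho,\phi)$, and map components $\varphi^{1}=ar+b\theta+c$, $\varphi^{2}=mr+n\theta+l$. Since each $\varphi^{\sigma}$ is affine in $(r,\theta)$, all second partial derivatives $\varphi^{\sigma}_{ij}$ vanish, which is the feature that makes the computation tractable; everything on the right-hand side of the biharmonic system will be expressible purely through the geometric data $\sigma(r), \lambda(\rho)$ and the constants $a,b,m,n$.

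First I would write down the Christoffel symbols and curvature of the two warped products. For the domain $dr^{2}+\sigma^{2}(r)d\theta^{2}$ the only nonzero symbols are $\Gamma^{1}_{22}=-\sigma\sigma'$ and $\Gamma^{2}_{12}=\sigma'/\sigma$, and $g^{11}=1$, $g^{22}=1/\sigma^{2}$. For the target $d\rho^{2}+\lambda^{2}(\rho)d\phi^{2}$ the nonzero symbols are $\bar\Gamma^{1}_{22}=-\lambda\lambda'$ and $\bar\Gamma^{2}_{12}=\lambda'/\lambda$, while the only independent curvature component is (up to sign conventions) $\bar R^{1}_{\,212}=-\lambda''/\lambda$ type, so that the curvature term in Lemma \ref{OL} contributes only factors involving $\lambda\lambda'$ and $(\lambda\lambda')'$. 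Using these data together with $\varphi^{1}_{r}=a,\varphi^{1}_{\theta}=b,\varphi^{2}_{r}=m,\varphi^{2}_{\theta}=n$ and $\varphi^{\sigma}_{ij}=0$ in the tension-field formula \eqref{fhm} gives exactly the stated expressions
\[
x=\tau^{1}=a\tfrac{\sigma'}{\sigma}-\Bigl(m^{2}+\tfrac{n^{2}}{\sigma^{2}}\Bigr)\lambda\lambda'(\rho),\qquad
y=\tau^{2}=m\tfrac{\sigma'}{\sigma}+2\Bigl(am+\tfrac{bn}{\sigma^{2}}\Bigr)\tfrac{\lambda'(\rho)}{\lambda}.
\]
This verifies the last two equations of (\ref{p1}).

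Next I would assemble the auxiliary pieces that enter Lemma \ref{OL}: $\Delta\varphi^{\sigma}$, $g(\nabla\varphi^{\alpha},\nabla\varphi^{\beta})$, and $g(\nabla\tau^{\alpha},\nabla\varphi^{\beta})$. With our metric $g(\nabla u,\nabla v)=u_{r}v_{r}+\sigma^{-2}u_{\theta}v_{\theta}$ and $\Delta u=u_{rr}+\sigma^{-2}u_{\theta\theta}+(\sigma'/\sigma)u_{r}$; since $\varphi^{\sigma}$ is affine, $\Delta\varphi^{1}=a\sigma'/\sigma$ and $\Delta\varphi^{2}=m\sigma'/\sigma$, while the $\nabla\varphi$-pairings collapse to the constants $a^{2}+b^{2}/\sigma^{2}$, $am+bn/\sigma^{2}$, $m^{2}+n^{2}/\sigma^{2}$. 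The pairings $g(\nabla\tau^{\alpha},\nabla\varphi^{\beta})=g(\nabla x,\nabla\varphi^{\beta})$ and $g(\nabla y,\nabla\varphi^{\beta})$ produce the combinations $ax_{r}+bx_{\theta}/\sigma^{2}$, $mx_{r}+nx_{\theta}/\sigma^{2}$, and their analogues with $y$, which are precisely the groupings that appear inside the parentheses of (\ref{p1}).

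Finally I substitute everything into the biharmonic PDE of Lemma \ref{OL} with $\sigma=1$ and then $\sigma=2$. The main obstacle is the bookkeeping in the term $\tau^{\alpha}g(\nabla\varphi^{\beta},\nabla\varphi^{\rho})(\partial_{\rho}\bar\Gamma^{\sigma}_{\alpha\beta}+\bar\Gamma^{\nu}_{\alpha\beta}\bar\Gamma^{\sigma}_{\nu\rho})$ combined with the curvature term: several contributions involving $\lambda',\lambda''$ and $(\lambda\lambda')'$ must be collected, and one has to recognize the cancellations that convert $\lambda''$-terms into the compact factor $(\lambda\lambda')'(\rho)$ appearing in the first equation. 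For $\sigma=1$, the only nonzero $\bar\Gamma^{1}_{\alpha\beta}$ is $\bar\Gamma^{1}_{22}$, so after substitution the equation reduces to $\Delta x$ plus terms proportional to $x$ and to the pairings $y_{r},y_{\theta},y^{2}$, yielding the first line of (\ref{p1}). For $\sigma=2$ the two nonzero components $\bar\Gamma^{2}_{12}=\bar\Gamma^{2}_{21}$ contribute symmetrically, producing the $y_{r},y_{\theta},x_{r},x_{\theta}$ combination multiplied by $\lambda'/\lambda$ and an $x$-term with the coefficient displayed in the second line. Comparing the resulting two equations with the two PDEs in (\ref{p1}) completes the proof, the converse direction being automatic since every step is an equivalence.
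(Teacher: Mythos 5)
Your proposal is correct and takes essentially the same route as the paper: apply Lemma \ref{OL} with the Christoffel symbols and target curvature of the two warped metrics, compute $\tau^1,\tau^2$ from the affine structure of $\varphi$, assemble the Laplacian, gradient-pairing, $\partial_{\rho}\bar\Gamma+\bar\Gamma\bar\Gamma$, and curvature contributions, and collect the $\lambda''$-terms into $(\lambda\lambda')'(\rho)$. Just note explicitly (as the paper does in its final ``which is equivalent to'' step) that the raw substitution yields a system \emph{linear} in $(x,y)$, and the form (\ref{p1}) --- with the nonlinear $y^{2}\lambda\lambda'(\rho)$ term and the merged $x$-coefficient in the second equation --- only emerges after one further substitution of the closed-form expressions for $x=\tau^{1}$ and $y=\tau^{2}$.
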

\begin{proof}
One can easily compute the connection coefficients of the domain and
the target surfaces to get
\begin{eqnarray}
\Gamma^1_{11}=0, \hskip0.3cm\Gamma^1_{12}=0, \hskip0.3cm
\Gamma^1_{22}=-\sigma\sigma',\hskip0.3cm \Gamma^2_{11}=0, \hskip0.3cm
\Gamma^2_{12}=\frac{\sigma'}{\sigma}, \hskip0.3cm \Gamma^2_{22}=0,
\end{eqnarray}
and
\begin{eqnarray}
\hskip0.6cm\bar{\Gamma}^1_{11}=0, \hskip0.3cm\bar{\Gamma}^1_{12}=0,
\hskip0.3cm \bar{\Gamma}^1_{22}=-\lambda\lambda'(\rho),\hskip0.3cm
\bar{\Gamma}^2_{11}=0, \hskip0.3cm \bar{\Gamma}^2_{12}=\frac{\lambda'(\rho)}{\lambda},
\hskip0.3cm \bar{\Gamma}^2_{22}=0.
\end{eqnarray}
A further computation gives the components of the Riemannian curvature of
the target surface as:
\begin{eqnarray}
\hskip0.6cm\bar{R}^1_{221}=-\bar{R}^1_{212}=\lambda\lambda''(\rho),\;
\bar{R}^2_{112}=-\bar{R}^2_{121}=\frac{\lambda''(\rho)}{\lambda}, \; {\rm
all\;other}\; \quad \bar{R}^l_{kij}=0.
\end{eqnarray}
We compute the components of tension field of the map $\varphi$ to have
\begin{eqnarray}\notag
\tau^1 &=& g^{ij}(\varphi^{1}_{ij}-\Gamma^k_{ij}\varphi^{1}_{k}+\bar{\Gamma}^1_{\alpha\beta}\varphi^{\alpha}_{i}\varphi^{\beta}_{j})
=-\Gamma_{11}^k\varphi^1_k-\frac{1}{\sigma^2}\Gamma_{22}^k\varphi^1_k+g(\nabla\varphi^2, \nabla\varphi^2)\bar{\Gamma}^1_{22}\\\label{pl3}
&=&-m^2\lambda\lambda'(\rho)+\frac{a\sigma\sigma'-n^2\lambda\lambda'(\rho)}{\sigma^2},\\\notag
\tau^2&=&g^{ij}(\varphi^{2}_{ij}-\Gamma^k_{ij}\varphi^{2}_{k}+\bar{\Gamma}^2_{\alpha\beta}\varphi^{\alpha}_{i}\varphi^{\beta}_{j})=-\Gamma_{11}^k\varphi^2_k-\frac{1}{\sigma^2}\Gamma_{22}^k\varphi^2_k+2g(\nabla\varphi^1, \nabla\varphi^2)\bar{\Gamma}^2_{12}\\\label{pl4}
&=&m\frac{\sigma'}{\sigma}+2(am+\frac{bn}{\sigma^2})\frac{\lambda'(\rho)}{\lambda}.
\end{eqnarray}
Using notations $x=\tau^1$, $y=\tau^2$, $x_r=\frac{\partial x}{\partial r}, x_\theta=\frac{\partial x}{\partial \theta},
y_r=\frac{\partial y}{\partial r}, y_\theta=\frac{\partial y}{\partial \theta}, \\x_{r\theta}=\frac{\partial^{2} x}{\partial r\partial\theta}, y_{r\theta}=\frac{\partial^{2} y}{\partial r\partial\theta},x_{rr}=\frac{\partial^{2} x}{\partial r^2}, y_{rr}=\frac{\partial^{2} y}{\partial r^2},
x_{\theta\theta}=\frac{\partial^{2} x}{\partial \theta^2}, y_{\theta\theta}=\frac{\partial^{2} y}{\partial \theta^2}$, we compute
\begin{eqnarray}\label{12}
\Delta\tau^1=g^{ij}(\tau^1_{ij}-\Gamma^k_{ij}\tau^1_k)=x_{rr}+\frac{\sigma\sigma'x_r+x_{\theta\theta}}{\sigma^2},\\
\Delta\tau^2=g^{ij}(\tau^2_{ij}-\Gamma^k_{ij}\tau^2_k)=y_{rr}+\frac{\sigma\sigma'y_r+y_{\theta\theta}}{\sigma^2},\\\notag
\end{eqnarray}
\begin{eqnarray}
2g(\nabla\tau^{\alpha},\nabla\varphi^{\beta})\bar{\Gamma}^1_{\alpha\beta}=2g(\nabla\tau^{2},\nabla\varphi^{2})\bar{\Gamma}^1_{22}=-2\lambda\lambda'(\rho)(my_r+\frac{n}{\sigma^2}y_\theta),
\end{eqnarray}
\begin{eqnarray}
&&2g(\nabla\tau^{\alpha},\nabla\varphi^{\beta})\bar{\Gamma}^2_{\alpha\beta}=2g(\nabla\tau^{1},\nabla\varphi^{2})\bar{\Gamma}^2_{12}+2g(\nabla\tau^{2},\nabla\varphi^{1})\bar{\Gamma}^2_{21}\\\notag
&&=\frac{2(m x_r+\frac{n}{\sigma^2}x_\theta)\lambda'(\rho)}{\lambda}+\frac{2(a y_r+\frac{b}{\sigma^2}y_\theta)\lambda'(\rho)}{\lambda},\\\notag
\end{eqnarray}
\begin{eqnarray}
\tau^{\alpha}\Delta\varphi^{\beta}\bar{\Gamma}^1_{\alpha\beta}=\tau^{2}\Delta\varphi^{2}\bar{\Gamma}^1_{22}=-\frac{m\sigma'\lambda\lambda'(\rho)}{\sigma}y,\\\notag
\end{eqnarray}
\begin{eqnarray}
\tau^{\alpha}\Delta\varphi^{\beta}\bar{\Gamma}^2_{\alpha\beta}=\tau^{1}\Delta\varphi^{2}\bar{\Gamma}^2_{12}+\tau^{2}\Delta\varphi^{1}\bar{\Gamma}^2_{21}=\frac{\lambda'(\rho)}{\lambda}\left(\frac{m\sigma'}{\sigma}x+\frac{a\sigma'}{\sigma}y\right),\\\notag
\end{eqnarray}
\begin{eqnarray}
&&g(\nabla\varphi^{\beta}, \nabla\varphi^{\rho})\partial_{\rho}{\bar
\Gamma_{\alpha\beta}^{1}}=\tau^{\alpha}g(\nabla\varphi^{\beta},\nabla\bar{\Gamma}^1_{\alpha\beta})=\tau^{2}g(\nabla\varphi^{2},\nabla\bar{\Gamma}^1_{22})\\\notag
&&=-\left(am+\frac{bn}{\sigma^2}\right)\left(\lambda\lambda'(\rho)\right)'(\rho)y,
\end{eqnarray}
\begin{eqnarray}
&&g(\nabla\varphi^{\beta}, \nabla\varphi^{\rho})\partial_{\rho}{\bar
\Gamma_{\alpha\beta}^{2}}=\tau^{\alpha}g(\nabla\varphi^{\beta},\nabla\bar{\Gamma}^2_{\alpha\beta})\\\notag
&&=\tau^{1}g(\nabla\varphi^{2},\nabla\bar{\Gamma}^2_{12})+\tau^{2}g(\nabla\varphi^{1},\nabla\bar{\Gamma}^2_{21}),\\\notag
&&=\left(am+\frac{bn}{\sigma^2}\right)\left(\frac{\lambda'(\rho)}{\lambda}\right)'(\rho)x
+\left(a^2+\frac{b^2}{\sigma^2}\right)\left(\frac{\lambda'(\rho)}{\lambda}\right)'(\rho)y,\\\notag
\end{eqnarray}
\begin{eqnarray}
&&\tau^{\alpha}g(\nabla\varphi^{\beta},\nabla\varphi^{\rho})\bar{\Gamma}^v_{\alpha\beta}\bar{\Gamma}^1_{v\rho}=\tau^{1}g(\nabla\varphi^{2},\nabla\varphi^{2})
\bar{\Gamma}^2_{12}\bar{\Gamma}^1_{22}
+\tau^{2}g(\nabla\varphi^{1},\nabla\varphi^{2})
\bar{\Gamma}^2_{21}\bar{\Gamma}^1_{22}\\\notag
&&=-(m^2+\frac{n^2}{\sigma^2})\lambda'^2(\rho)x-(am+\frac{bn}{\sigma^2})\lambda'^2(\rho)y
\end{eqnarray}
\begin{eqnarray}
&&\tau^{\alpha}g(\nabla\varphi^{\beta},\nabla\varphi^{\rho})\bar{\Gamma}^v_{\alpha\beta}\bar{\Gamma}^2_{v\rho}\\\notag
&&=\tau^{1}g(\nabla\varphi^{2},\nabla\varphi^{1})
\bar{\Gamma}^2_{12}\bar{\Gamma}^2_{21}+\tau^{2}g(\nabla\varphi^{1},\nabla\varphi^{1})
\bar{\Gamma}^2_{21}\bar{\Gamma}^2_{21}+\tau^{2}g(\nabla\varphi^{2},\nabla\varphi^{2})
\bar{\Gamma}^1_{22}\bar{\Gamma}^2_{12}\\\notag
&&=(am+\frac{bn}{\sigma^2})(\frac{\lambda'(\rho)}{\lambda})^2x+(a^2+\frac{b^2}{\sigma^2})(\frac{\lambda'(\rho)}{\lambda})^2y
-(m^2+\frac{n^2}{\sigma^2})\lambda'^2(\rho)y,\\\notag
\end{eqnarray}
\begin{eqnarray}
&&-\tau^vg(\nabla\varphi^{\alpha},\nabla\varphi^{\beta})\bar{R}^1_{\beta\alpha
v}=-\tau^1g(\nabla\varphi^2,\nabla\varphi^2)\bar{R}^1_{221}-\tau^2g(\nabla\varphi^1,\nabla\varphi^2)\bar{R}^1_{212}\\\notag
=&&-(m^2+\frac{n^2}{\sigma^2})\lambda\lambda''(\rho)x+(am+\frac{bn}{\sigma^2})\lambda\lambda''(\rho)y,\\\notag
\end{eqnarray}
and
\begin{eqnarray}\label{13}
&&-\tau^vg(\nabla\varphi^{\alpha},\nabla\varphi^{\beta})\bar{R}^2_{\beta\alpha
v}=-\tau^1g(\nabla\varphi^2,\nabla\varphi^1)\bar{R}^2_{121}-\tau^2g(\nabla\varphi^1,\nabla\varphi^1)\bar{R}^2_{112}\\\notag
=&&(am+\frac{bn}{\sigma^2})\frac{\lambda''(\rho)}{\lambda}x-(a^2+\frac{b^2}{\sigma^2})\frac{\lambda''(\rho)}{\lambda}y.\\\notag
\end{eqnarray}
Substituting (\ref{12})$\sim$ (\ref{13}) into (\ref{BI3}) we conclude
that $\varphi$ is biharmonic if and only if it solves the system
\begin{equation}\label{pl5}
\begin{cases}
x_{rr}+\frac{x_{\theta\theta}}{\sigma^2}+\frac{\sigma'}{\sigma}x_r-(m^2+\frac{n^2}{\sigma^2})\left(\lambda\lambda'(\rho)\right)'(\rho)
x\\-2m\lambda\lambda'(\rho)y_r-\frac{2n\lambda\lambda'(\rho)}{\sigma^2}y_\theta- \left(\frac{m\sigma'\lambda\lambda'(\rho)}{\sigma}+2(am+\frac{bn}{\sigma^2})\lambda'^2(\rho)\right)y=0,\\
y_{rr}+\frac{y_{\theta\theta}}{\sigma^2}+\left(\frac{\sigma'}{\sigma}+\frac{2a\lambda'(\rho)}{\lambda}\right)y_{r}
+\frac{2b\lambda'(\rho)}{\sigma^2\lambda}y_\theta+\left(\frac{a\sigma'\lambda'(\rho)}{\sigma\lambda}
-(m^2+\frac{n^2}{\sigma^2})\lambda'^2(\rho)\right)y\\
+\frac{2m\lambda'(\rho)}{\lambda}x_r+\frac{2n\lambda'(\rho)}{\sigma^2\lambda}x_\theta
+\left(\frac{m\sigma'\lambda'(\rho)}{\sigma\lambda}
+2(am+\frac{bn}{\sigma^2})\frac{\lambda''(\rho)}{\lambda}\right)x=0,\\
x=\tau^1=a\frac{\sigma'}{\sigma}-(m^2+\frac{n^2}{\sigma^2})\lambda\lambda'(\rho),\\
y=\tau^2=m\frac{\sigma'}{\sigma}+2(am+\frac{bn}{\sigma^2})\frac{\lambda'(\rho)}{\lambda},
\end{cases}
\end{equation}
which is equivalent to the system (\ref{p1}). Thus, the lemma follows.
\end{proof}
Now we are ready to prove the following theorem that gives a classification of all biharmonic maps in a large family that includes most examples mentioned in Section 2.
\begin{theorem}\label{T2S2}
The map $\varphi: (T^2, dr^2+d\theta^2)\longrightarrow (S^2, d\rho^2+\sin^2\rho d\phi^2)$ from a flat torus into a $2$-sphere with $\varphi (r, \theta)=(ar+b\theta+c, mr+n\theta+l)$ is biharmonic if and only if one of the following cases happens\\
$(A)$ $a=b=0$ and $c=\frac{\pi}{2}$. In this case, the map $\varphi (r, \theta)=(\pi/2, mr+n\theta+l)$ is actually a harmonic map,\\
$(B)$ $m=n=0$. In this case, the map $\varphi (r, \theta)=(ar+b\theta+c, l)$ is actually a harmonic map, or\\
$(C)$ $a=b=0$, $m^2+n^2\neq 0$ and $c=\frac{\pi}{4}$, or, $c=\frac{3\pi}{4}$. In this case, the map $\varphi (r, \theta)=(c, mr+n\theta+l)$ with $c=\frac{\pi}{4}$, or, $c=\frac{3\pi}{4}$ is a proper biharmonic map.\\
\end{theorem}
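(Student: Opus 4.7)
The plan is to specialize Lemma \ref{PL} to the flat-torus case ($\sigma\equiv 1$) and standard-sphere target ($\lambda(\rho)=\sin\rho$). Because $\rho=ar+b\theta+c$ and $\phi=mr+n\theta+l$ are affine in $(r,\theta)$, the tension-field components reduce to
\[ x=-A\sin\rho\cos\rho,\qquad y=2B\cot\rho, \]
where $A:=m^2+n^2$ and $B:=am+bn$, and all partial derivatives of $x$ and $y$ are computed explicitly via $\rho_r=a$, $\rho_\theta=b$. The analysis then splits on whether the pair $(a,b)$ vanishes.

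Case 1 ($a=b=0$): here $\rho\equiv c$, $y\equiv 0$, and $x$ is a constant, so every derivative appearing in (\ref{p1}) vanishes. The second equation of (\ref{p1}) is automatic, and the first reduces to $-A\cos(2c)\,x=0$, equivalently $\tfrac14 A^2\sin(4c)=0$. Restricting $c\in(0,\pi)$ to avoid the polar singularities of geodesic coordinates, this forces either $A=0$ (whence $m=n=0$, case (B), harmonic); or $c=\pi/2$, which also kills $x$ and yields the harmonic equator-map (case (A)); or $c\in\{\pi/4,\,3\pi/4\}$ with $A\neq 0$, where $x\neq 0$ so $\varphi$ is proper biharmonic (case (C)).

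Case 2 ($(a,b)\neq(0,0)$): here $\rho$ is a non-constant linear function on the torus, and the aim is to conclude $m=n=0$ (case (B)). Substituting $x$, $y$, and their derivatives into the first equation of (\ref{p1}) and simplifying via identities such as $\csc^2\rho\cdot\sin\rho\cos\rho=\cot\rho$ and $\cot\rho\,(1-\cos^2\rho)=\tfrac12\sin(2\rho)$, I expect the equation to collapse to
\[ \sin(2\rho)\,\bigl[\tfrac12 A^2\cos(2\rho)+2(a^2+b^2)A+2B^2\bigr]=0. \]
Since $\rho$ is non-constant, $\sin(2\rho)\not\equiv 0$ and the functions $1$ and $\cos(2\rho)$ are linearly independent on any nonempty open interval of $\rho$-values, so the bracket vanishes identically and each of its coefficients must be zero. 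This forces $A=0$, hence $m=n=0$ and then $B=0$; both tension-field components vanish and the second equation of (\ref{p1}) is trivially satisfied.

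The main obstacle is the algebraic reduction in Case 2: substituting $x,y$ and their derivatives expands the first equation of (\ref{p1}) into eight trigonometric terms in $\rho$ (carrying factors of $\sin(2\rho)$, $\sin(2\rho)\cos(2\rho)$, $\csc^2\rho\cdot\sin\rho\cos\rho$ and $\cot^2\rho\cdot\sin\rho\cos\rho$), and success hinges on recognizing the clean factorization displayed above. The second biharmonic equation performs a parallel simplification to a multiple of $AB\cos(2\rho)\cot\rho$, which is consistent with but strictly weaker than the constraint obtained from the first equation, so no solutions are missed by arguing solely from equation 1.
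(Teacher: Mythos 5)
Your proposal is correct and follows essentially the same route as the paper: specializing Lemma \ref{PL} with $\sigma\equiv 1$, $\lambda(\rho)=\sin\rho$, and reducing the fourth-order system to exactly the paper's pair of equations $\bigl[4(m^2+n^2)(a^2+b^2)+(m^2+n^2)^2\cos(2\rho)+4(am+bn)^2\bigr]\sin\rho\cos\rho=0$ and $4(m^2+n^2)(am+bn)\cot\rho\,\cos(2\rho)=0$, which matches your factorization up to an overall factor of $2$. Your case split on $(a,b)=(0,0)$ versus $(a,b)\neq(0,0)$ is organized slightly differently from the paper's split on $\cos\rho=0$ versus $\cos\rho\neq 0$ (followed by $m=n=0$ and $am+bn=0$ subcases), but both rest on the same key observation that a nonconstant affine $\rho$ cannot make $\cos(2\rho)$ constant, and your check that the second equation is implied by the first in the nonconstant case is consistent with the paper's analysis.
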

\begin{proof}
Applying Lemma \ref{PL} with $\sigma (r)=1, \lambda(\rho)=\sin\rho$ we conclude that the map $\varphi: (S^1\times S^1, dr^2+d\theta^2)\longrightarrow (S^2, d\rho^2+\sin^2\rho d\phi^2)$ from a flat torus into a $2$-sphere with $\varphi (r, \theta)=(ar+b\theta+c, mr+n\theta+l)$ is biharmonic if and only if
\begin{equation}\label{FT2}
\begin{cases}
 x_{rr}+x_{\theta\theta}-(m^2+n^2)(\cos2 \rho)
x -2m(\sin \rho \cos\rho) y_r\\\
 -2n(\sin \rho \cos\rho)y_\theta- 2(am+bn)(\cos^2 \rho) \,y=0,\\
 y_{rr}+y_{\theta\theta}+2a(\cot\rho)y_{r}
+2b(\cot\rho)y_\theta-(m^2+n^2)(\cos^2 \rho)\,y\\
 +2m(\cot\rho)x_r+2n(\cot\rho)x_\theta
-2(am+bn)x=0,\\
 x=\tau^1=-\frac{1}{2}(m^2+n^2)\sin(2\rho),\\
 y=\tau^2=2(am+bn)\cot\rho.
\end{cases}
\end{equation}

Substituting the last two equations into the first two equations of (\ref{FT2}) we see that the biharmonicity of the map $\varphi$ becomes
\begin{equation}\label{GD1}
\begin{cases}
[4(m^2+n^2)(a^2+b^2)+(m^2+n^2)^2\cos (2\rho)+4(am+bn)^2]\sin \rho\cos \rho=0,\\
4(m^2+n^2)(am+bn)\frac{\cos \rho}{\sin\rho}\cos (2\rho)=0.
\end{cases}
\end{equation}

Noting that $0<\rho=ar+b\theta+c< \pi$ (and hence) $\sin\rho\ne 0$ we conclude that Equation (\ref{GD1}) is equivalent to
\begin{equation}\label{GD2}
\begin{cases}
[4(m^2+n^2)(a^2+b^2)+(m^2+n^2)^2\cos (2\rho)+4(am+bn)^2]\cos\rho=0,\\
4(m^2+n^2)(am+bn)\cos \rho\cos (2\rho)=0.
\end{cases}
\end{equation}

We solve Equation (\ref{GD2}) by considering the following cases,\\
{\bf Case I:} $\cos \rho=0$. This means that $\cos (ar+b\theta+c)=0$ for any $r, \theta \in \r$. This, together with $0<\rho=ar+b\theta+c< \pi$, implies that $a=b=0, c=\pi/2$. Noting that the components of the tension field of the map $\varphi$ are given by the last two equations of (\ref{FT2}) we conclude that the solutions ($a=b=0, c=\pi/2, m, n, l \in \r$) given in this case are actually harmonic maps. From this we obtain case (A).\\

{\bf Case II:} $\cos \rho\ne 0$. In this case, the biharmonicity of the map $\varphi$ is equivalent to
\begin{equation}\label{GD3}
\begin{cases}
4(m^2+n^2)(a^2+b^2)+(m^2+n^2)^2\cos (2\rho)+4(am+bn)^2=0,\\
4(m^2+n^2)(am+bn)\cos (2\rho)=0.
\end{cases}
\end{equation}

If $m=n=0$, then, $m=n=0, a, b, c, l\in \r$ are solutions of (\ref{GD3}) and we see from the last two equations of (\ref{FT2}) that the maps given by these solutions are actually harmonic maps. This gives case (B).\\

If otherwise, i.e., $m^2+n^2\ne 0$, then it follows from (\ref{GD3}) that the map $\varphi$ is biharmonic if and only if
\begin{equation}\label{GD4}
\begin{cases}
\cos (2\rho)=-\frac{4(m^2+n^2)(a^2+b^2)+4(am+bn)^2}{(m^2+n^2)^2},\\
(am+bn)\cos (2\rho)=0.
\end{cases}
\end{equation}
If $am+bn\ne 0$, then  we can easily check that Equation (\ref{GD4}) has no solution. If $am+bn=0$, then the map $\varphi$ is biharmonic if and only if
\begin{equation}\label{GD5}
\begin{cases}
\cos (2\rho)=-\frac{4(a^2+b^2)}{m^2+n^2},\\
am+bn=0.
\end{cases}
\end{equation}

Since the first equation of (\ref{GD5}) means that $\cos (2ar+2b\theta+2c)=-\frac{4(a^2+b^2)}{m^2+n^2}$ for any $r, \theta \in \r$, we conclude that $a=b=0$ and hence $\cos (2c)=0$. Recalling that $2\rho=2c\in (0, 2\pi)$ we obtain solutions $c=\pi/4, 3\pi/4$. In these cases, $\cos \rho=\cos c\ne 0$. It follows from the third equation of (\ref{FT2}) that the first component of the tension field $\tau^1=-\frac{1}{2}(m^2+n^2)\sin(2\rho)=\pm \frac{1}{2}(m^2+n^2)\ne 0$. Thus, the biharmonic maps $\varphi(r, \theta)=(\pi/4, mr+n\theta +l),\; \varphi(r, \theta)=(3\pi/4, mr+n\theta +l)$ are proper biharmonic maps. From this we obtain case (C).\\

Summarizing the above results we obtain the theorem.
\end{proof}

As immediate consequences of Theorem \ref{T2S2}, we have the following corollaries.
\begin{corollary}\label{F4pi}
For $m^2+n^2\neq 0$, the map $(T^2, dr^2+d\theta^2)\longrightarrow (S^2, d\rho^2+\sin^2\rho d\phi^2)$ with $\varphi (r, \theta)=(\pm \frac{1}{\sqrt{2}},  \frac{1}{\sqrt{2}} e^{(mr+n\theta+l)i})$ is a proper biharmonic map.\\
In particular, the compositions of the family of harmonic embeddings $F_s: (T^2, dr^2+d\theta^2)\longrightarrow S^3$,
$F_s(r,\theta)=(e^{ir}\cos s , e^{i\theta}\sin s )$ followed by the Hopf fibration $H: S^3\longrightarrow S^2$,
\begin{eqnarray}\notag
&& \varphi_s=H\circ F_s: (T^2, dr^2+d\theta^2)\longrightarrow (S^2, d\rho^2+\sin^2\rho d\phi^2), \\\notag && \varphi_s(r, \theta)=(\rho(r, \theta), \phi(r, \theta), {\rm with}\;\; \begin{cases}
\rho(r, \theta)=2s, \; s={\rm constant}, \\\phi(r, \theta)=r-\theta,
\end{cases}
\end{eqnarray}
are proper biharmonic maps if and only if $s=\pi/8$ or $s=3\pi/8$.
\end{corollary}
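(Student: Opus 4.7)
The plan is to show that this corollary reduces immediately to Theorem \ref{T2S2} once the map is rewritten in the geodesic polar coordinates $(\rho,\phi)$ on $S^2$ that were fixed in the theorem.

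First I would translate the map $\varphi(r,\theta)=(\pm \tfrac{1}{\sqrt{2}},\,\tfrac{1}{\sqrt{2}}e^{(mr+n\theta+l)i})$ from its $(x,y,z)\in S^2$ form into the coordinates used in Theorem \ref{T2S2}. Under the parametrization $(x,y,z)=(\cos\rho,\,e^{i\phi}\sin\rho)$ (which yields the metric $d\rho^2+\sin^2\rho\, d\phi^2$ as in Example \ref{Ex5}/6 type conventions), the condition $x=\pm\tfrac{1}{\sqrt{2}}$ with $\sqrt{y^2+z^2}=\tfrac{1}{\sqrt{2}}$ forces $\rho$ to be constant equal to $\pi/4$ (for the $+$ sign) or $3\pi/4$ (for the $-$ sign), while $\phi=mr+n\theta+l$. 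Hence $\varphi$ has the form $(ar+b\theta+c,\,mr+n\theta+l)$ with $a=b=0$, $c=\pi/4$ or $3\pi/4$, and $m^2+n^2\neq 0$. This is exactly Case (C) of Theorem \ref{T2S2}, which asserts that such a map is a proper biharmonic map. This settles the first assertion of the corollary.

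For the "in particular" part, I would plug the explicit data of $\varphi_s=H\circ F_s$ into the parametric family $(ar+b\theta+c,\,mr+n\theta+l)$. From $\rho(r,\theta)=2s$ and $\phi(r,\theta)=r-\theta$ one reads off $a=b=0$, $c=2s$, $m=1$, $n=-1$, $l=0$, so that $m^2+n^2=2\neq 0$. Applying Theorem \ref{T2S2}: Case (B) is excluded because $(m,n)\neq(0,0)$; Case (A) would force $c=2s=\pi/2$, giving a (non-proper) harmonic map; and Case (C) gives a proper biharmonic map precisely when $c=2s=\pi/4$ or $c=2s=3\pi/4$, i.e.\ $s=\pi/8$ or $s=3\pi/8$. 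For every other value of $s$, the map is either harmonic (e.g.\ $s=\pi/4$ or the degenerate values $s=0,\pi/2$) or fails biharmonicity entirely, so neither direction of the "if and only if" can fail.

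The only genuine subtlety, and the one I would be most careful about, is the coordinate translation in the first step: one must verify that the chosen embedding $(\cos\rho,\,e^{i\phi}\sin\rho)$ of $S^2$ really does induce the target metric $d\rho^2+\sin^2\rho\,d\phi^2$ used throughout Section 3, so that the classification of Theorem \ref{T2S2} genuinely applies. Once this identification is in place, everything else is a direct read-off from cases (A)--(C) of that theorem, and no further computation with the PDE system \eqref{FT2} is needed.
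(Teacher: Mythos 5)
Your proposal is correct and matches the paper's own route: the paper presents Corollary \ref{F4pi} as an immediate consequence of Theorem \ref{T2S2}, obtained exactly as you do by rewriting the map in the coordinates $(x,y,z)=(\cos\rho,\,e^{i\phi}\sin\rho)$ so that it falls under Case (C) with $a=b=0$, $c=\pi/4$ or $3\pi/4$, and, for $\varphi_s$, reading off $c=2s$, $m=1$, $n=-1$ to get $s=\pi/8$ or $3\pi/8$. Your care about verifying that this parametrization induces the metric $d\rho^2+\sin^2\rho\,d\phi^2$ is the right (and only) point of caution, and it checks out.
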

\begin{remark}
Note that Corollary \ref{F4pi} provides many new examples of proper biharmonic maps from a flat torus into a sphere, including special cases $\varphi: (S^1\times S^1, dr^2+d\theta^2)\longrightarrow (S^2, g^{S^2})$ with $\varphi (r, \theta)=(\pm \frac{1}{\sqrt{2}},  \frac{1}{\sqrt{2}}\cos\theta,  \frac{1}{\sqrt{2}}\sin \theta )$. These special cases, up to an isometry of the target sphere, are the same maps obtained in \cite{Ou4} by construction of orthogonal multiplication of complex numbers (See \cite{Ou4}, Theorem 2.2 for details). These special cases of proper biharmonic map were also known as special solutions to the biharmonic equation for rotationally symmetric maps from a flat torus into a $2$-sphere (See \cite{MR} and \cite{WOY}).
\end{remark}
\begin{corollary}
The map $ \varphi=H\circ f:(T^2, dr^2+d\theta^2)\longrightarrow (S^2, d\rho^2+\sin^2\rho d\phi^2)$
\begin{eqnarray}\notag
\varphi(r, \theta)=(\rho(r, \theta), \phi(r, \theta)), {\rm with}\; \begin{cases}
\rho(r,\theta)=-r-\theta, \\\phi(r,\theta)=\theta,
\end{cases}
\end{eqnarray}
constructed in Example \ref{Ex4} is neither a harmonic map nor a biharmonic map.
\end{corollary}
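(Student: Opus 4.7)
My plan starts by observing that the given map is the specialization $(a,b,c,m,n,l) = (-1,-1,0,0,1,0)$ of the family \eqref{MF}, with $\sigma(r) \equiv 1$ and $\lambda(\rho) = \sin\rho$. Thus the formulas developed in Lemma~\ref{PL} and in the proof of Theorem~\ref{T2S2} apply verbatim to this map.

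To show $\varphi$ is not harmonic, I would read off the first component of the tension field from the third line of \eqref{FT2}: with our parameters it becomes $\tau^1 = -\tfrac{1}{2}(m^2+n^2)\sin(2\rho) = \tfrac{1}{2}\sin(2r+2\theta)$, which is not identically zero on $T^2$. Hence $\varphi$ is not a harmonic map.

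To show $\varphi$ is not biharmonic, the small subtlety is that the derivation of Theorem~\ref{T2S2} used the global assumption $0 < \rho = ar+b\theta+c < \pi$, which fails here since $\rho = -r - \theta$ is unbounded. However, biharmonicity is a pointwise local condition, and on any open subset of $T^2$ whose image avoids the two poles of $S^2$ (i.e., where $\sin\rho \ne 0$) the geodesic polar chart on $S^2$ is regular, so the reduced system \eqref{GD1} derived in the proof of Theorem~\ref{T2S2} must hold pointwise. Substituting our parameters ($m^2+n^2=1$, $a^2+b^2=2$, $am+bn=-1$) into the first equation of \eqref{GD1} yields $\bigl[12+\cos(2\rho)\bigr]\sin\rho\cos\rho = 0$. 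Since the bracket is at least $11$ for every $\rho$, this would force $\sin(2\rho)\equiv 0$ on an open set, which is false; for instance at $(r,\theta) = (-\pi/6,-\pi/6)$ we have $\rho = \pi/3$ and the left-hand side equals $\tfrac{23}{2}\cdot\tfrac{\sqrt{3}}{4}\ne 0$. This contradiction shows $\varphi$ is not biharmonic.

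The only (modest) obstacle is the global-versus-local distinction noted above: once one recognizes that the algebraic identities obtained in the proof of Theorem~\ref{T2S2} hold locally on any chart where $\sin\rho \ne 0$, the corollary reduces to substituting the parameters and evaluating at a single convenient point, both entirely routine.
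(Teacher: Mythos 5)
Your proposal is correct and takes essentially the paper's own route: the paper obtains this corollary as an immediate consequence of Theorem \ref{T2S2} (the parameters $a=b=-1$, $c=0$, $m=0$, $n=1$ satisfy $a^2+b^2\neq 0$ and $m^2+n^2\neq 0$, so none of cases (A)--(C) applies), which is precisely the substitution into \eqref{GD1} that you carry out explicitly. Your additional observation that the hypothesis $0<\rho=ar+b\theta+c<\pi$ used in the proof of Theorem \ref{T2S2} fails here, and your fix of working locally on open sets where $\sin\rho\neq 0$ before evaluating at a point, is a legitimate tightening of a point the paper glosses over, but it does not alter the method.
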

\begin{corollary}
The Gauss map of the torus $X: T^2 \longrightarrow \r^3$, $X(r, \theta)=( a\sin r,\; (b+a\cos r)\cos \theta,\; (b+a\cos r)\sin \theta)$, viewed as a map $(T^2, dr^2+d\theta^2)\longrightarrow (S^2, d\rho^2+\sin^2\rho d\phi^2)$ from a flat torus into a sphere, is neither harmonic nor biharmonic.
\end{corollary}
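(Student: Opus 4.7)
The plan is to reduce the corollary to a direct application of Theorem \ref{T2S2}. First I would rewrite the Gauss map in the coordinates used in that theorem. Example \ref{Ex5} parametrises the target sphere by $(x,y,z)=(\sin\rho,\cos\rho\cos\phi,\cos\rho\sin\phi)$, whose induced round metric is $d\rho^2+\cos^2\rho\,d\phi^2$; in those coordinates the Gauss map is simply $(r,\theta)\mapsto (r,\theta)$. Theorem \ref{T2S2}, by contrast, uses geodesic polar coordinates measured from a pole, so that the metric reads $d\rho^2+\sin^2\rho\,d\phi^2$. The two charts are related by the isometry $\tilde\rho=\pi/2-\rho$, $\tilde\phi=\phi$, so in the coordinates of the theorem the Gauss map becomes
\[
\varphi(r,\theta)=\bigl(\tfrac{\pi}{2}-r,\;\theta\bigr).
\]

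This realises $\varphi$ as a member of the affine family $(ar+b\theta+c,\,mr+n\theta+l)$ classified in Theorem \ref{T2S2}, with parameters $(a,b,c,m,n,l)=(-1,0,\pi/2,0,1,0)$. With these parameters in hand, it suffices to observe that $\varphi$ falls into none of the three biharmonic cases of the theorem: case $(A)$ requires $a=b=0$, which fails since $a=-1$; case $(B)$ requires $m=n=0$, which fails since $n=1$; and case $(C)$ again requires $a=b=0$. Because the theorem is an ``if and only if'' statement, $\varphi$ is not biharmonic, and since every harmonic map is biharmonic, it is not harmonic either.

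The only place that calls for care is the coordinate conversion between the two conventions for geodesic polar coordinates on $S^2$, since a sign or shift error there could drop $\varphi$ into case $(A)$ and produce the wrong conclusion. Once that translation is pinned down, the rest is a one-line check against the three cases of Theorem \ref{T2S2}; the domain metric is already the flat $dr^2+d\theta^2$ assumed in the theorem, so no further comparison of metrics is required.
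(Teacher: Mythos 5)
Your proposal is correct and matches the paper's intended argument: the paper states this corollary as an immediate consequence of Theorem \ref{T2S2}, and your coordinate change $\tilde\rho=\pi/2-\rho$ converting the Gauss map of Example \ref{Ex5} into $\varphi(r,\theta)=(\pi/2-r,\theta)$ with $(a,b,c,m,n,l)=(-1,0,\pi/2,0,1,0)$ is exactly the translation needed, after which the map visibly falls into none of cases $(A)$, $(B)$, $(C)$. Your closing remark that non-biharmonicity implies non-harmonicity (since harmonic maps are biharmonic) correctly disposes of both claims at once.
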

\begin{corollary}
Let $f: (T^2, dr^2+d\theta^2)\longrightarrow S^3$ with $f(r,\theta)=(\cos (kr)e^{im\theta} , \sin (kr) e^{in\theta})$ be the family of immersions studied in \cite{La} . Then, the family of maps from a torus into a $2$-sphere defined by
the construction via Hopf fibration
\begin{eqnarray}\notag
&& \varphi=H\circ f: (T^2, dr^2+d\theta^2)\longrightarrow (S^2, d\rho^2+\sin^2\rho d\phi^2), \\\notag &&\phi(r,\theta)=(\rho(r,\theta),\phi(r,\theta)), \; {\rm with}\;\;
\begin{cases}
\rho(r,\theta)=2kr, \\\phi(r,\theta)=(m-n)\theta.
\end{cases}
\end{eqnarray}
contains no proper biharmonic map.
\end{corollary}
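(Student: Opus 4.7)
The plan is to apply Theorem \ref{T2S2} verbatim, since the composition $H\circ f$ is already in the affine form covered there. Writing the general map of Theorem \ref{T2S2} as $\varphi(r,\theta)=(Ar+B\theta+C,\;Mr+N\theta+L)$ to avoid a clash with the Lawson parameters $k,m,n$, one reads off from the stated expressions $\rho(r,\theta)=2kr$ and $\phi(r,\theta)=(m-n)\theta$ that
\begin{equation*}
A=2k,\qquad B=0,\qquad C=0,\qquad M=0,\qquad N=m-n,\qquad L=0.
\end{equation*}
The whole argument then consists of checking whether any of the three alternatives (A), (B), (C) of Theorem \ref{T2S2} can be satisfied by this data.

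First I would dispose of case (A), which demands $C=\pi/2$; since $C=0$ is forced by the Lawson composition, this case is never realised, for any choice of $k,m,n$. The same observation kills case (C), which also requires $C\in\{\pi/4,\,3\pi/4\}$. The only remaining possibility is case (B), which requires $M=N=0$. The equation $M=0$ is automatic; the second reduces to $m=n$. But Theorem \ref{T2S2}(B) asserts that in this situation $\varphi$ is harmonic, hence not a proper biharmonic map. So in every case reachable from the Lawson family the map is either harmonic or fails to satisfy the biharmonic system, which is exactly the statement of the corollary.

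The one soft spot in this plan is the degenerate sub-case $k=0$, where $\rho\equiv 0$ falls outside the open range $(0,\pi)$ used in the proof of Theorem \ref{T2S2} (there $\sin\rho\neq 0$ is invoked to pass from (\ref{GD1}) to (\ref{GD2})). I would handle this separately by observing that when $k=0$ the Lawson map becomes $f(r,\theta)=(e^{im\theta},0)$, which lies entirely in a single fibre of the Hopf fibration; therefore $H\circ f$ is constant and trivially harmonic. I expect no genuine obstacle in the proof: the content is essentially the parameter matching together with a three-line case analysis from Theorem \ref{T2S2}, and the only care required is to keep the two notational clusters $(k,m,n)$ and $(A,B,C,M,N,L)$ distinct.
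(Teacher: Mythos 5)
Your proposal is correct and is essentially the paper's own argument: the paper offers no separate proof, stating the corollary as an immediate consequence of Theorem \ref{T2S2}, and your parameter matching ($a=2k$, $b=c=l=0$, first angular coefficient $0$, second $m-n$) together with the observation that cases (A) and (C) are excluded by $c=0$ while case (B) produces only harmonic maps is exactly that deduction. Your separate handling of the degenerate value $k=0$, where $H\circ f$ is constant and hence trivially harmonic, is a harmless extra precaution that the paper does not spell out.
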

\section{Biharmonic maps from a non-flat torus into a $2$-sphere}
In this section, we will study biharmonic maps from a non-flat torus into a 2-sphere. The non-flat metric on the torus we consider is $dr^2+(k+\cos r)^2 d\theta^2$ which is homothetic to the induced metric $g_{T}=a^2dr^2+(b+a\cos r)^2d\theta^2$ (See Example \ref{Ex5} for details) from the standard embedding $X: T^2 \longrightarrow \r^3$, $X(r, \theta)=( a\sin r,\; (b+a\cos r)\cos \theta,\; (b+a\cos r)\sin \theta)$.
\begin{theorem}\label{NTS}
The map from a non-flat torus into a $2$-sphere, $\varphi: (T^2, dr^2+(k+\cos r)^2d\theta^2)\longrightarrow (S^2, d\rho^2+\cos^2\rho d\phi^2)$ $(k>1)$ with $\varphi (r, \theta)=(ar+b\theta+c, mr+n\theta+l)$ is biharmonic if and only if it is harmonic.
\end{theorem}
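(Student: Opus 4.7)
The plan is to specialize Lemma~\ref{PL} to $\sigma(r) = k+\cos r$ and $\lambda(\rho) = \cos\rho$, for which
\[
\frac{\sigma'}{\sigma} = -\frac{\sin r}{k+\cos r},\quad \lambda\lambda'(\rho) = -\tfrac{1}{2}\sin(2\rho),\quad (\lambda\lambda'(\rho))'(\rho) = -\cos(2\rho),\quad \frac{\lambda'(\rho)}{\lambda} = -\tan\rho.
\]
The last two equations of (\ref{p1}) then give the explicit tension-field components
\[
x = -\frac{a\sin r}{k+\cos r}+\Bigl(m^{2}+\frac{n^{2}}{(k+\cos r)^{2}}\Bigr)\sin\rho\cos\rho,
\]
\[
y = -\frac{m\sin r}{k+\cos r}-2\Bigl(am+\frac{bn}{(k+\cos r)^{2}}\Bigr)\tan\rho,
\]
with $\rho = ar+b\theta+c$, and I would substitute these expressions into the first two equations of (\ref{p1}). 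The goal is then to show that every solution of the resulting system satisfies $x \equiv y \equiv 0$.

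I would perform a case analysis on $(a,b)$. When $a = b = 0$, $\rho = c$ is constant and the biharmonic system collapses into ordinary differential identities in $r$ whose left-hand sides are rational functions of $\sin r, \cos r$ with denominators $(k+\cos r)^{j}$. Because $k > 1$, the family $\{1,\ \sin r/(k+\cos r),\ 1/(k+\cos r),\ 1/(k+\cos r)^{2},\ldots\}$ is linearly independent over $\mathbb{R}$, so matching the coefficients of each basis element would force either $m = n = 0$ or else $\sin(2c) = 0$ together with $m = 0$; in either situation the formulas for $x$ and $y$ above vanish identically, proving harmonicity. When $(a,b) \neq (0,0)$, $\rho$ varies nontrivially with either $\theta$ (if $b\neq 0$) or $r$ (if $b=0$, $a\neq 0$); matching the coefficients of the independent trigonometric functions of $\rho$ against the rational $r$-basis $\{(k+\cos r)^{-l}\}$ then yields an overdetermined linear system in $(a,b,m,n)$ whose only solutions again satisfy $\tau^{1} \equiv \tau^{2} \equiv 0$.

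The main obstacle is the bookkeeping: each of the two biharmonic PDEs produces on the order of twenty terms after substitution, mixing rational functions of $\cos r, \sin r$ with trigonometric functions of $\rho$, and one has to organize them so that the independence arguments can be applied cleanly. The decisive structural feature is the non-constancy of $\sigma = k+\cos r$, which injects a rigid rational $r$-dependence into every coefficient; this rigidity is absent in the flat case of Theorem~\ref{T2S2}, where $\sigma\equiv 1$ allowed the sphere-coordinate terms to balance each other at the isolated values $\rho = \pi/4, 3\pi/4$ (producing the proper biharmonic solutions of case (C)). In the present non-flat setting this balance is broken by the $r$-dependence of $\sigma$, and the only biharmonic maps in the family are harmonic, as asserted.
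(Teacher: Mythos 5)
Your setup is correct and matches the paper's: specializing Lemma~\ref{PL} with $\sigma(r)=k+\cos r$, $\lambda(\rho)=\cos\rho$ gives exactly the tension-field components in the paper's system (\ref{NTSS1}), and two of your three cases go through essentially as in the paper. For $b\neq 0$ (the paper's Case I), $\rho=ar+b\theta+c$ varies in $\theta$ while all other coefficients depend only on $r$, so the linear independence of $1,\sin 2\rho,\cos 2\rho,\sin 4\rho$ as functions of $\theta$ cleanly forces $a=m=n=0$, hence harmonicity; for $a=b=0$ (the paper's Case II), $\rho=c$ is constant and independence of $\{1,\cos r,\cos^2 r\}$ (equivalently your rational basis in $(k+\cos r)^{-j}$, using $k>1$) yields the same conclusion you state.

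The genuine gap is the case $b=0$, $a\neq 0$, which is the heart of the theorem and occupies most of the paper's proof (its Case III). There your independence argument fails as stated: with $b=0$ one has $\rho=ar+c$, so the ``trigonometric functions of $\rho$'' are themselves functions of the single variable $r$ with frequencies $2a$ and $4a$, and they are \emph{not} in general independent of the trigonometric/rational $r$-terms produced by $\sigma=k+\cos r$. For example, if $a=1/2$ then $\sin 2\rho=\sin(r+2c)$ is a linear combination of $\sin r$ and $\cos r$, so no coefficient-matching between ``$\rho$-terms'' and ``$r$-terms'' is available. The paper handles this by multiplying the first biharmonic equation by $(k+\cos r)^4$, expanding everything via product-to-sum formulas into a linear combination of roughly forty functions $\sin[(2a\pm i)r+2c]$, $\sin[(4a\pm i)r+4c]$, $\sin jr$ $(i,j=1,\dots,4)$, and then performing a resonance analysis: in the generic (non-resonant) case all coefficients vanish and $d_1=d_4=0$ forces $a=0$, a contradiction; in the resonant cases, which occur precisely for $a=\pm j,\ \pm\frac{2j-1}{2},\ \pm\frac{2j-1}{4}$ $(j=1,2,3,4)$, one must collect like terms and argue separately (first extracting $m=0$ from the coefficient $c_4=m^4/32$, then a second resonance analysis at $a=\pm 1,\pm 1/2,\pm 1/4$ forcing $n=0$ and a final contradiction). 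Your proposal omits this entire enumeration of exceptional frequencies, which is exactly where a ``balance'' analogous to the flat-case solutions $\rho=\pi/4,3\pi/4$ could conceivably reappear and must be excluded by hand. Note also that the correct conclusion in this case is stronger than what you assert: the paper shows there are \emph{no} biharmonic maps at all when $b=0$, $a\neq 0$ (not merely that solutions are harmonic), which is recorded as a corollary immediately after the theorem.
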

\begin{proof}
Let $\varphi:(T^2,dr^2+(k+\cos r)^2 d\theta^2)\longrightarrow
(S^2,d\rho^2+\cos^2\rho d\phi^2)$ $(k>1)$ with $\varphi(r,\theta)=(ar+b\theta+c, mr+n\theta+l)$. Using Lemma \ref{PL} with $\sigma (r)=k+\cos r,\;\lambda(\rho)=\cos\rho$, we see that $\varphi$ is biharmonic if and only if it solves the system
\begin{equation}\label{NTSS1}
\begin{cases}
\frac{x_{\theta\theta}}{(k+\cos r)^2}+x_{rr}-\frac{\sin r}{k+\cos r}x_r+(m^2+\frac{n^2}{(k+\cos r)^2})\cos2\rho\;
x\\+\left(2my_r+\frac{2n}{(k+\cos r)^2}y_\theta+y^2\right)\sin\rho\cos\rho=0,\\
\frac{y_{\theta\theta}}{(k+\cos r)^2}+y_{rr}-\frac{\sin r}{k+\cos r}y_{r}-2\left(ay_{r}
+mx_r+\frac{by_\theta+nx_\theta}{(k+\cos r)^2}
\right)\frac{\sin\rho}{\cos\rho}
\\+2\left(\frac{m\sin r\sin\rho}{(k+\cos r)\cos\rho}
-(am+\frac{bn}{(k+\cos r)^2})\frac{\cos2\rho}{\cos^2\rho}\right)x=0,\\
x=\tau^1=-\frac{a\sin r}{k+\cos r}+\frac{1}{2}(m^2+\frac{n^2}{(k+\cos r)^2})\sin2\rho,\\
y=\tau^2=-\frac{m\sin r}{k+\cos r}-2(am+\frac{bn}{(k+\cos r)^2})\tan\rho.
\end{cases}
\end{equation}
A straightforward computation using the last two equations of (\ref{NTSS1}) yields
\begin{equation}\label{NTSS2}
\begin{cases}
x_r=-\frac{ak\cos r+a}{(k+\cos r)^2}+\frac{n^2\sin r\sin2\rho}{(k+\cos r)^3}+a(m^2+\frac{n^2}{(k+\cos r)^2})\cos2\rho,\\
x_{rr}=-\frac{a(2-k^2)\sin r+ak\sin r\cos r-4an^2\sin r \cos 2\rho}{(k+\cos r)^3}\\-\frac{2a^2m^2(k+\cos r)^4+2a^2n^2(k+\cos r)^2-n^2(k\cos r+\cos^2r+3\sin^2r)}{(k+\cos r)^4}\sin2\rho,\\
x_\theta=b(m^2+\frac{n^2}{(k+\cos r)^2})\cos2\rho,\\
x_{\theta\theta}=-2b^2(m^2+\frac{n^2}{(k+\cos r)^2})\sin2\rho,\\
y_r=-\frac{mk\cos r+m}{(k+\cos r)^2}-\frac{4bn\sin r\tan\rho}{(k+\cos r)^3}-\frac{2a(am+\frac{bn}{(k+\cos r)^2})}{\cos^2\rho},\\
y_{rr}=-\frac{m(2-k^2)\sin r+mk\sin r\cos r}{(k+\cos r)^3}-\frac{4bn(k\cos r+\cos^2r+3\sin^2r)\tan\rho}{(k+\cos r)^4}\\-\frac{8abn\sin r}{(k+\cos r)^3\cos^2\rho}-\frac{4a^2(am+\frac{bn}{(k+\cos r)^2})\sin\rho}{\cos^3\rho},\\
y_\theta=-\frac{2b(am+\frac{bn}{(k+\cos r)^2})}{\cos^2\rho},\\
y_{\theta\theta}=\frac{4b^2(am+\frac{bn}{(k+\cos r)^2})\sin\rho}{\cos^3\rho}.
\end{cases}
\end{equation}
Substituting (\ref{NTSS2}) into the first equation of (\ref{NTSS1}) we have
\begin{equation}\label{NTSS3}
\begin{cases}
\frac{(a(k^2-1)-2bmn)\sin r}{(k+\cos r)^3}+\frac{2am^2\sin r}{k+\cos r}+[\frac{n^2(k\cos r+\sin^2r+1)}{(k+\cos r)^4}+\frac{\frac{m^2}{2}\sin^2r-m^2k\cos r-m^2}{(k+\cos r)^2}\\-2(am+\frac{bn}{(k+\cos r)^2})^2-2(m^2+\frac{n^2}{(k+\cos r)^2})(a^2+\frac{b^2}{(k+\cos r)^2})]\sin2\rho\\+[\frac{2an^2\sin r}{(k+\cos r)^3}-\frac{4am^2\sin r}{k+\cos r}+\frac{2bmn\sin r}{(k+\cos r)^3}]\cos2\rho+\frac{1}{4}(m^2+\frac{n^2}{(k+\cos r)^2})^2\sin4\rho=0,\\
\rho=ar+b\theta+c.
\end{cases}
\end{equation}
We will solve Equation (\ref{NTSS3}) by the following cases:\\
{\bf Case $(I)$:} $b\neq0$.\\
In this case, using the assumption that $b\neq0$ and the fact that the functions $1,\;\sin2\rho,\;\cos2\rho$ and $\sin4\rho$ are linearly independent as functions of  variable $\theta$, we conclude from (\ref{NTSS3}) that
\begin{equation}\label{NTSS5}
\begin{cases}
\frac{(a(k^2-1)-2bmn)\sin r}{(k+\cos r)^3}+\frac{2am^2\sin r}{k+\cos r}=0,\\
\frac{n^2(k\cos r+\sin^2r+1)}{(k+\cos r)^4}+\frac{\frac{m^2}{2}\sin^2r-m^2k\cos r-m^2}{(k+\cos r)^2}\\-2(am+\frac{bn}{(k+\cos r)^2})^2-2(m^2+\frac{n^2}{(k+\cos r)^2})(a^2+\frac{b^2}{(k+\cos r)^2})=0,\\
\frac{2an^2\sin r}{(k+\cos r)^3}-\frac{4am^2\sin r}{k+\cos r}+\frac{2bmn\sin r}{(k+\cos r)^3}=0,\\
\frac{1}{4}(m^2+\frac{n^2}{(k+\cos r)^2})^2=0.
\end{cases}
\end{equation}

From the fourth and the first equation of (\ref{NTSS5}) we have $a=m=n=0$. In this case, we use the last two equations in (\ref{NTSS1}) to conclude that the components of the tension field $x=\tau^1=0,\;
y=\tau^2=0$. This implies that the map $\varphi$ is actually harmonic.\\
{\bf Case $(II)$:} $b=0$ and $a=0$.\\
In this case, substituting $a=b=0$ and (\ref{NTSS2}) into the second equation of (\ref{NTSS1}) we obtain
\begin{equation}\label{NTSS7}
\begin{cases}
\frac{m(k^2-1)\sin r}{(k+\cos r)^3}-\frac{2mn^2\sin r}{(k+\cos r)^3}\sin^2\rho+\frac{2m^3\sin r}{k+\cos r}\sin^2\rho=0,\\
\rho=c.
\end{cases}
\end{equation}
If $c=0$, then $\sin \rho=0$ and (\ref{NTSS7}) reduces to $m(k^2-1)=0$ which implies $m=0$ since $k>1$ by assumption. In this case, the last two equations in (\ref{NTSS1}) imply that $\tau(\varphi)=0$ and hence $\varphi$ is harmonic.\\
If $c\ne 0$, then (\ref{NTSS7}) is equivalent to
\begin{equation}
\begin{cases}
m[(k^2-1)+2(m^2k^2-n^2)\sin^2\rho+4m^2k\sin^2\rho\cos r+2m^2\sin^2\rho\cos^2r]=0,\\
\rho=c\ne 0.
\end{cases}
\end{equation}
If $m\neq0$, we have
\begin{equation}\label{NTSS8}
\begin{cases}
k^2-1+2(m^2k^2-n^2)\sin^2\rho+4m^2k\sin^2\rho\cos r+2m^2\sin^2\rho\cos^2r=0,\\
\rho=c\ne 0.
\end{cases}
\end{equation}
Note that $k>1$ and the functions $1,\;\cos r,\;\cos^2r$ are linearly independent, Equation (\ref{NTSS8}) implies that $ \sin^2\rho=0$ and reduces to $k^2-1=0$, a contradiction. \\
If otherwise, i.e., $m=0$, substituting $a=b=m=0$ and (\ref{NTSS2}) into the first equation of (\ref{NTSS1}) we have
\begin{equation}
\begin{cases}
\frac{n^2\sin2\rho}{2(k+\cos r)^4}(2k\cos r-2\cos^2r+4+n^2\cos2\rho)=0,\\
\rho=c\ne 0.
\end{cases}
\end{equation}
The above equation implies that $n^2\sin2\rho=0$, i.e., $n=0$. It follows that $x=\tau^1=0,\;y=\tau^2=0$, meaning that the map $\varphi$ is harmonic in this case.\\

{\bf Case $(III)$:} $b=0$, $a\neq0$. We will show that Equation (\ref{NTSS3}) has no solution in this case.\\

Multiplying $(k+\cos r)^4$ to both sides of Equation ( \ref{NTSS3}) and
simplifying the resulting equation by using the product-to-sum formulas we have
\begin{eqnarray}\notag 
&& a_0\sin(2ar+2c)
+\sum_{i=1}^4\frac{a_i\pm b_i}{2}\sin[(2a \pm i)r+2c]\\\notag
&&+c_0\sin(4ar+4c)+\sum_{i=1}^4\frac{c_i}{2}\sin[(4a \pm i)r+4c]\\\notag
&&+d_1\sin r+d_2\sin2 r+d_3\sin 3r+d_4\sin 4r=0,
\end{eqnarray}
or, equivalently
\begin{eqnarray}\notag \label{NTSS31}
&& a_0\cos(2c)\sin(2ar)+a_0\sin (2c)\cos(2ar)\\\notag
&&+\cos(2c) \sum_{i=1}^4\frac{a_i\pm b_i}{2}\sin(2a \pm i)r+\sin (2c)\sum_{i=1}^4\frac{a_i\pm b_i}{2}\cos(2a \pm i)r\\
&&+c_0\cos (4c)\sin(4ar)+c_0\sin(4c)\cos (4ar)\\\notag
&&+\cos (4c) \sum_{i=1}^4\frac{c_i}{2}\sin(4a \pm i)r +\sin (4c) \sum_{i=1}^4\frac{c_i}{2}\cos(4a \pm i)r\\\notag
&&+d_1\sin r+d_2\sin2 r+d_3\sin 3r+d_4\sin 4r=0,
\end{eqnarray}
where
\begin{eqnarray}\label{NTSS30}
a_0 &=& -7m^2k^2/4-2a^2n^2k^2-a^2n^2-7m^2/16-4a^2m^2k^4\\\notag &&-12a^2m^2k^2-3a^2m^2/2+3n^2/2,\\
a_1 &=& -5m^2k/2-m^2k^3-4a^2n^2k-16a^2m^2k^3-12a^2m^2k+n^2k,\\
a_2 &=& -5m^2k^2/4-a^2n^2-m^2/2-12a^2m^2k^2-2a^2m^2-n^2/2,\\
a_3 &=& -m^2k/2-4a^2m^2k, \;\;\;\; a_4 =-m^2/16-a^2m^2/2,\\
b_1 &=& -4am^2k^3-3am^2k+2an^2k,\;\;b_2 = -6am^2k-am^2+an^2,\\
b_3 &=& -3am^2k, \;\;\;\;\; b_4 = -am^2/2,\\
c_0 &=&m^4k^4/4+3m^4k^2/4+3m^4/32+m^2n^2k^2/2+m^2n^2/4+n^4/4\\
c_1 &=& m^4k^3+3m^4k/4+m^2n^2k, \;\; c_2 = 3m^4k^2/4+m^2n^2/4+m^4/8,\\
c_3 &=&m^4k/4, \;\;\;\; c_4 =m^4/32,\\
d_1 &=& a(k^2-1)k+am^2(2k^3+3k/2),\\
d_2 &=&a(k^2-1)/2+am^2(3k^2+1/2),\\
d_3 &=&3am^2k/2, \;\;\;\; d_4 = am^2/4.
\end{eqnarray}

We observe that the $40$ trigonometric functions appear in the linear combination on the left hand side of Equation (\ref{NTSS31} ) are linearly independent for the values of $a$ that produce no like terms among them. Note also that even in the case the values of $a$ produce like terms, we can collect the like terms so that the resulting set of functions are linearly independent. 

{\bf Case $(A)$:} For those values of $a$ that do not turn any of $\sin(2ar)$, \\$\sin(2a \pm i)r, \sin(4ar), \sin(4a \pm i)r (i=1, 2, 3, 4) $ into a like term of  $\sin r, \sin2 r, \sin 3r$, \\$\sin 4r$. In this case, we have all coefficients vanish, including $d_1=d_4=0$, which imply $a=0$, a contradiction.\\

{\bf Case $(B)$:} For those values of $a$ that turn one of $\sin(2ar), \sin(2a \pm i)r, \sin(4ar), $\\$\sin(4a \pm i)r (i=1, 2, 3, 4) $ into a like term to one of $\sin r, \sin2 r, \sin 3r, \sin 4r$. We can check that the only values of $a$ that turn one of $\sin(2ar), \sin(2a \pm i)r, \sin(4ar), \sin(4a \pm i)r $ $ (i=1, 2, 3, 4) $ into a like term to one of  $\sin r, \sin2 r, \sin 3r$, \\$\sin 4r$ are the following:
\begin{equation}\label{AV}
a=\pm j, \pm \frac{2j-1}{2}, \pm \frac{2j-1}{4}, j=1, 2, 3, 4.
\end{equation}

It is not difficult to check that none of positive values of $a$ given in (\ref{AV}) can produce like term of $\sin (4a+4)r$ and none of the negative values of $a$ given in (\ref{AV}) can produce like term of $\sin (4a-4)r$. It follows that for each value of $a$ given in (\ref{AV}), we can, after a possible collecting of like terms in (\ref{NTSS31} ), use the linear independence of the resulting set of functions to conclude that $c_4= \frac{m^4}{32}=0$. This implies that $m=0$ for any values of $a$ given in (\ref{AV}). Substituting $m=0$ into Equation (\ref{NTSS31} ) we have
\begin{eqnarray}\notag \label{m01}
&& \sin (2c)\{a_0\cos(2ar)+\frac{a_1+b_1}{2}\cos[(2a+1)r]+\frac{a_1-b_1}{2}\cos[(2a-1)r]\\
&& +\frac{a_2+b_2}{2}\cos[(2a+2)r]+\frac{a_2-b_2}{2}\cos[(2a-2)r]\}+c_0\sin(4c) \cos(4ar)\\\notag
&& +\cos (2c)\{a_0\sin(2ar)+\frac{a_1+b_1}{2}\sin[(2a+1)r]+\frac{a_1-b_1}{2}\sin[(2a-1)r]\\\notag
&& +\frac{a_2+b_2}{2}\sin [(2a+2)r]+\frac{a_2-b_2}{2}\sin[(2a-2)r]\}+c_0\cos(4c) \sin(4ar)\\\notag
&&+d_1\sin r+d_2\sin2 r=0,
\end{eqnarray}
where
\begin{eqnarray}\label{NTSS30}\label{a0}
a_0 &=& -2a^2n^2k^2-a^2n^2+3n^2/2,\;\;\;\;\;\;\;a_1 = -4a^2n^2k+n^2k, \\ a_2 &=& -a^2n^2-n^2/2,\;\;\;\;\;
b_1 = 2an^2k,\;\;b_2 = an^2,\;\;\;\;\;\;\;\; c_0 =\frac{n^4}{4},\\
d_1 &=& a(k^2-1)k,\;\;\;\;d_2 = a(k^2-1)/2.
\end{eqnarray}

We can check that the only values of $a$ that turn $\sin (2ar)$ into a like term to one of $\sin(2a \pm i)r, (i=1,2), \sin(4ar),  \sin r, \sin2 r$ are $a= \pm 1,  \pm 1/2, \pm 1/4$. \\

For the case $a\ne \pm 1,  \pm 1/2, \pm 1/4$, $\sin (2ar)$ is not a like term to any other term and hence we can use linear independence of functions in Equation (\ref{m01}) to conclude that $a_0=0$. Using (\ref{a0}) we have either $3/2-a^2(2k^2+1)=0$ or $n=0$. The first case gives the values of $a$ contradicting those given by (\ref{AV}). The latter case, i.e., $n=0$, implies that $a_0=a_1=a_2=b_1=b_2=c_0=0$. Substituting these into (\ref{m01}) we have $d_1=d_2=0$ which imply $a=0$, a contradiction.\\

For the case $a= \pm 1,  \pm 1/2$, or $\pm 1/4$, we first check that for $a=\pm 1/4$, we have $d_2=0$, a contradiction. Secondly, we  check that for $a= \pm 1,  \pm 1/2$, we have $a_1\pm b_1=0, a_2\pm b_2=0$ respectively. A further checking shows that in each of these four cases, we have  $n=0$, which, as we have seen in the above argument, will lead to a contradiction. This ends the proof that the biharmonic map equation has no solution in Case B.

Summarizing the results in Cases A and B we conclude that for the case $b=0, a\ne 0$, the biharmonic map equations have no solution.\\

Combining the results proved in  Cases (I), (II),  and (III) we obtain the theorem.
\end{proof}

From the froof of Theorem \ref{NTS} we have seen the following
\begin{corollary}
For $a\neq 0$, there exists no biharmonic map in the family of the maps from a non-flat torus into a $2$-sphere,  $\varphi: (T^2, dr^2+(k+\cos r)^2d\theta^2)\longrightarrow (S^2, d\rho^2+\cos^2\rho d\phi^2)$ $(k>1)$ with $\varphi (r, \theta)=(ar+b\theta+c, mr+n\theta+l)$.
\end{corollary}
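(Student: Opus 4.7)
The plan is to apply Lemma \ref{PL} with $\sigma(r)=k+\cos r$ and $\lambda(\rho)=\cos\rho$, which turns biharmonicity into two explicit scalar PDEs in $(r,\theta)$ together with the closed-form formulas for $x=\tau^1$ and $y=\tau^2$. Since $x$ and $y$ are explicit rational-trigonometric expressions in $r$ whose argument $\rho=ar+b\theta+c$ is linear in $(r,\theta)$, every partial derivative $x_r,x_{rr},x_\theta,x_{\theta\theta},y_r,y_{rr},y_\theta,y_{\theta\theta}$ is obtained by direct differentiation. Substituting them back reduces the biharmonic system to a trigonometric identity in $(r,\theta)$ that must hold on all of $T^2$, and I can then exploit linear independence of trigonometric families to extract constraints on the parameters.

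I would split the analysis into three cases: (I) $b\neq 0$, (II) $b=0=a$, and (III) $b=0$, $a\neq 0$. Case (I) should be quick, because $\rho$ depends genuinely on $\theta$: the functions $1,\sin 2\rho,\cos 2\rho,\sin 4\rho$ are linearly independent in $\theta$, so each of their $r$-coefficients must vanish separately. The coefficient of $\sin 4\rho$ is a positive multiple of $(m^2+n^2/(k+\cos r)^2)^2$, which immediately forces $m=n=0$; the remaining equations then force $a=0$, the tension field vanishes, and $\varphi$ is harmonic. Case (II) is similarly clean: with $\rho=c$ constant, the second PDE collapses to a polynomial identity in $\cos r$ over the basis $\{1,\cos r,\cos^2 r\}$, and using $k>1$ one successively forces $m=0$ and then $n=0$, again yielding $\tau(\varphi)=0$.

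The main obstacle is Case (III). Here $\rho=ar+c$ depends only on $r$, so after clearing the denominator $(k+\cos r)^4$ and applying product-to-sum formulas, the first PDE becomes a linear combination of $\sin r,\sin 2r,\sin 3r,\sin 4r$ together with sines of frequencies $2a,\;2a\pm i,\;4a,\;4a\pm i$ for $i=1,2,3,4$, each weighted by $\sin$ or $\cos$ of the phase offsets $2c$ or $4c$. For generic $a$ these roughly forty trigonometric functions of $r$ are linearly independent, so every coefficient must vanish; in particular the pure $\sin(jr)$-coefficients $(j=1,\ldots,4)$ all carry a factor of $a$ times a nonzero polynomial in $k$ and $m$, and combined with $k>1$ this forces $a=0$, contradicting the standing assumption.

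The truly delicate step is handling the finitely many resonant values of $a$ at which the frequencies $2a,\;2a\pm i,\;4a,\;4a\pm i$ collide with one of $1,2,3,4$, namely $a\in\{\pm j,\;\pm(2j-1)/2,\;\pm(2j-1)/4:j=1,2,3,4\}$. For each such $a$ I would collect like terms, reapply linear independence to the resulting reduced but still sufficiently rich trigonometric basis, and show that the surviving highest-frequency coefficient, which is essentially a positive multiple of $m^4$, forces $m=0$; substituting $m=0$ trims the system, and a further application of linear independence together with the constraint $k>1$ forces $n=0$ and then $a=0$. This enumeration of resonances is pure bookkeeping rather than a conceptual step, but it is where the bulk of the work lies.
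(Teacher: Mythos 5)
Your proposal is correct and follows essentially the same route as the paper, which deduces this corollary directly from the proof of Theorem \ref{NTS}: the same application of Lemma \ref{PL} with $\sigma(r)=k+\cos r$, $\lambda(\rho)=\cos\rho$, the same case split ($b\neq 0$; $b=a=0$; $b=0$, $a\neq 0$), the same use of linear independence of $1$, $\sin 2\rho$, $\cos 2\rho$, $\sin 4\rho$ in $\theta$ to force $a=m=n=0$ when $b\neq 0$, and the same resonance analysis over $a\in\{\pm j,\pm(2j-1)/2,\pm(2j-1)/4\}$ in which the surviving top coefficient $m^4/32$ kills $m$, after which $n=0$ and then $a=0$ give the contradiction. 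The only immaterial difference is that your Case (II) is vacuous under the hypothesis $a\neq 0$, exactly as in the paper.
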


Noting that the models $(S^2, d\rho^2+\sin^2\rho d\phi^2)$ and $(S^2, d\rho^2+\cos^2\rho d\phi^2)$ of a $2$-sphere are isometric to each other, we can use Theorem \ref{NTS} to have the following classification results.
\begin{corollary}\label{4pi}
The composition of the family of immersions $f_s: (T^2, dr^2+(k+\cos r)^2d\theta^2)\longrightarrow S^3$, $f_s(r,\theta)=(e^{ir}\cos s , e^{i\theta}\sin s )$ followed by the Hopf fiberation $H: S^3\longrightarrow S^2$,
\begin{eqnarray}\notag
&& \varphi_s=H\circ f_s: (T^2, dr^2+(k+\cos r)^2d\theta^2)\longrightarrow (S^2, d\rho^2+\sin^2\rho d\phi^2), \\\notag && \varphi_s(r,\theta)=(\rho(r,\theta), \phi(r,\theta)), {\rm with}\;\; \begin{cases}
\rho(r,\theta)=2s, \; s={\rm constant}, \\\phi(r,\theta)=r-\theta,
\end{cases}
\end{eqnarray}
is neither a harmonic map nor a biharmonic map.
\end{corollary}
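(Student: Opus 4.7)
The plan is to reduce the statement directly to Theorem \ref{NTS}, since the family $\varphi_s$ falls into the class of maps $(ar+b\theta+c, mr+n\theta+l)$ classified there, and that theorem asserts that biharmonic is equivalent to harmonic in this family. So the entire task reduces to verifying that $\varphi_s$ is not harmonic; non-biharmonicity then follows automatically.

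First, I would write $\varphi_s$ in explicit coordinate form. Applying the Hopf construction $H(z,w)=(|z|^2-|w|^2, 2z\bar w)$ to $f_s(r,\theta)=(e^{ir}\cos s, e^{i\theta}\sin s)$ yields $(\cos 2s,\; \sin 2s\, e^{i(r-\theta)})$, which in the geodesic polar coordinates of $(S^2, d\rho^2+\sin^2\rho\, d\phi^2)$ becomes $\varphi_s(r,\theta)=(2s,\, r-\theta)$. Since $(S^2, d\rho^2+\sin^2\rho\, d\phi^2)$ and $(S^2, d\rho^2+\cos^2\rho\, d\phi^2)$ are isometric via $\rho \mapsto \pi/2-\rho$, the same map is represented in the cosine model as $\tilde\varphi_s(r,\theta)=(\pi/2-2s,\, r-\theta)$. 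This puts $\tilde\varphi_s$ into the family of Theorem \ref{NTS} with coefficients $a=b=0$, $c=\pi/2-2s$, $m=1$, $n=-1$, $l=0$.

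Next I would compute the tension field using the formulas from Lemma \ref{PL} with $\sigma(r)=k+\cos r$ and $\lambda(\rho)=\cos\rho$. The second component simplifies to
\begin{equation*}
\tau^2 \;=\; m\frac{\sigma'}{\sigma}+2\Bigl(am+\frac{bn}{\sigma^2}\Bigr)\frac{\lambda'(\rho)}{\lambda} \;=\; -\frac{\sin r}{k+\cos r},
\end{equation*}
which is not identically zero on the torus (e.g.\ nonzero at $r=\pi/2$). Hence $\tilde\varphi_s$, and therefore $\varphi_s$, is not harmonic for any value of $s$.

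Finally, Theorem \ref{NTS} says that within this family of maps from the non-flat torus $(T^2, dr^2+(k+\cos r)^2 d\theta^2)$ to $S^2$, biharmonicity implies harmonicity. Since $\varphi_s$ fails to be harmonic, the contrapositive shows it fails to be biharmonic. There is no serious obstacle: the only care point is keeping the two isometric polar models of $S^2$ straight so that Theorem \ref{NTS} applies to the correct $\tilde\varphi_s$; once this bookkeeping is done, the conclusion is immediate from the already-proved classification.
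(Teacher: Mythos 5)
Your proposal is correct and takes essentially the same route as the paper: the corollary is derived from Theorem \ref{NTS} via the isometry $\rho\mapsto \pi/2-\rho$ between the models $(S^2, d\rho^2+\sin^2\rho\, d\phi^2)$ and $(S^2, d\rho^2+\cos^2\rho\, d\phi^2)$, with non-harmonicity read off from the tension-field formulas of Lemma \ref{PL} (indeed $\tau^2=-\sin r/(k+\cos r)\not\equiv 0$ for $a=b=0$, $m=1$, $n=-1$). The only nitpick is your aside that $\varphi_s$ is non-harmonic ``for any value of $s$'': at $s=0,\pi/2$ the map is constant (hence harmonic) and the polar coordinates degenerate, but these values are excluded anyway since the corollary concerns the immersions $f_s$.
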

\begin{corollary}
The composition of the map $f:(T^2, dr^2+(k+\cos r)^2d\theta^2)\longrightarrow S^3$ with
$f(r,\theta)=\big(\cos\frac{-(r+\theta)}{2} e^{i\frac{x+y}{2}},\;\sin \frac{-(r+\theta)}{2}\,e^{i\frac{r-\theta}{2}}\big)$
given in \cite{Ou4} followed by the construction via Hopf fibration
\begin{eqnarray}\notag
&& \varphi=H\circ f:(T^2, dr^2+(k+\cos r)^2d\theta^2)\longrightarrow (S^2, d\rho^2+\sin^2\rho d\phi^2),\\\notag
&&\varphi(r,\theta)=(\rho(r,\theta), \phi(r,\theta)), {\rm with}\; \begin{cases}
\rho(r,\theta)=-r-\theta, \\\phi(r,\theta)=\theta,
\end{cases}
\end{eqnarray}
is neither a harmonic map nor a biharmonic map.
\end{corollary}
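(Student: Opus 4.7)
The plan is to reduce the claim to Theorem \ref{NTS} (or more precisely to the first Corollary immediately following it) by exploiting an isometry between the two standard models of $S^2$ used in the paper. Specifically, the map $\Psi:(S^2,d\rho^2+\sin^2\rho\,d\phi^2)\longrightarrow(S^2,d\rho^2+\cos^2\rho\,d\phi^2)$ defined by $\Psi(\rho,\phi)=(\tfrac{\pi}{2}-\rho,\phi)$ is an isometry because $\sin\rho=\cos(\tfrac{\pi}{2}-\rho)$. Since both harmonicity and biharmonicity of a map are invariant under isometries of the target, $\varphi$ enjoys either property if and only if $\tilde\varphi:=\Psi\circ\varphi$ does.

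First I would compute $\tilde\varphi$ explicitly: from $\varphi(r,\theta)=(-r-\theta,\theta)$ one gets $\tilde\varphi(r,\theta)=(r+\theta+\tfrac{\pi}{2},\theta)$, which fits the affine family of Theorem \ref{NTS} with parameters $a=1$, $b=1$, $c=\tfrac{\pi}{2}$, $m=0$, $n=1$, $l=0$. In particular $a\neq 0$, so the Corollary stating that no biharmonic map exists in this family when $a\neq 0$ applies directly and gives that $\tilde\varphi$, and hence $\varphi$, is not biharmonic. Since every harmonic map is biharmonic, the non-harmonicity is an immediate consequence, settling both claims simultaneously.

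The main obstacle is essentially bookkeeping: one has to verify that the isometry argument is legitimate (this is standard, as both the tension field and the bitension field transform covariantly under isometries of the target) and that the affine structure of the first component of $\varphi$ is preserved by $\Psi$ with the right new coefficients. As a sanity check one can bypass the isometry altogether and substitute $\sigma(r)=k+\cos r$, $\lambda(\rho)=\sin\rho$ together with $(a,b,m,n)=(-1,-1,0,1)$ directly into the tension field formulas of Lemma \ref{PL}; a short calculation gives $\tau^1=\tfrac{\sin r}{k+\cos r}+\tfrac{\sin(2r+2\theta)}{2(k+\cos r)^2}$, which does not vanish identically (for instance at $r=\tfrac{\pi}{2}$, $\theta=0$) and already establishes non-harmonicity. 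No genuinely new computation is needed beyond what the proof of Theorem \ref{NTS} already provides.
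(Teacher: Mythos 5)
Your proposal is correct and follows essentially the same route as the paper, which justifies this corollary by noting that $(S^2, d\rho^2+\sin^2\rho\, d\phi^2)$ and $(S^2, d\rho^2+\cos^2\rho\, d\phi^2)$ are isometric and then invoking Theorem \ref{NTS} together with the corollary that no biharmonic map exists in this family when $a\neq 0$. Your explicit isometry $\Psi(\rho,\phi)=(\tfrac{\pi}{2}-\rho,\phi)$, the transformed parameters $(a,b,c,m,n,l)=(1,1,\tfrac{\pi}{2},0,1,0)$, and the sanity-check computation $\tau^1=\tfrac{\sin r}{k+\cos r}+\tfrac{\sin(2r+2\theta)}{2(k+\cos r)^2}\not\equiv 0$ are all accurate.
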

\begin{corollary}
Let $f: (T^2, dr^2+(k+\cos r)^2d\theta^2)\longrightarrow S^3$ with $f(r,\theta)=(\cos (cr)e^{im\theta} , \sin (cr) e^{in\theta})$ be the family of immersions studied in \cite{La} . Then, the family of maps from a torus into a $2$-sphere defined by
the construction via Hopf fibration
\begin{eqnarray}\notag
&& \varphi=H\circ f: (T^2, dr^2+(k+\cos r)^2d\theta^2)\longrightarrow (S^2, d\rho^2+\sin^2\rho d\phi^2), \\\notag &&\phi(r,\theta)=(\rho(r,\theta),\phi(r,\theta)), \; {\rm with}\;\;
\begin{cases}
\rho(r,\theta)=2cr, \\\phi(r,\theta)=(m-n)\theta.
\end{cases}
\end{eqnarray}
is neither a harmonic map nor a biharmonic map.
\end{corollary}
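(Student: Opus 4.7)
The plan is to reduce the statement to the corollary stated immediately above it, essentially for free. Observe that $\varphi(r,\theta)=(2cr,\,(m-n)\theta)$ already has the affine form $(ar+b\theta+c_{0},\,mr+n\theta+l)$ treated in Theorem~\ref{NTS}. The only mismatch is the target model: here $S^{2}$ carries $d\rho^{2}+\sin^{2}\rho\,d\phi^{2}$, while Theorem~\ref{NTS} and its corollary use the isometric model $d\rho^{2}+\cos^{2}\rho\,d\phi^{2}$. So I would first transport the problem through the isometry $\rho\mapsto \pi/2-\rho$, which identifies the two models via $\cos(\pi/2-\rho)=\sin\rho$ and converts $\varphi$ into
\[
\tilde\varphi(r,\theta)=\bigl(\pi/2-2cr,\,(m-n)\theta\bigr),
\]
still an affine map of the required type, now with leading $r$-coefficient $a=-2c$, constant term $c_{0}=\pi/2$, and all other coefficients zero except for the $\theta$-coefficient $n=m-n$ in the second slot.

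Since $f(r,\theta)=(\cos(cr)e^{im\theta},\sin(cr)e^{in\theta})$ is an immersion of $T^{2}$ into $S^{3}$, the parameter $c$ is necessarily nonzero (otherwise $f$ collapses onto a circle in $S^{3}$ and the statement becomes vacuous). Hence $a=-2c\neq 0$, and the corollary stated just before the present one — which asserts that \emph{no} biharmonic map exists in this affine family from $(T^{2},dr^{2}+(k+\cos r)^{2}d\theta^{2})$ into the $\cos^{2}\rho$-model of $S^{2}$ as soon as $a\neq 0$ — applies to $\tilde\varphi$. Thus $\tilde\varphi$, and hence $\varphi$, is not biharmonic. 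Because every harmonic map is automatically biharmonic, $\varphi$ is not harmonic either, which yields the claim.

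The main potential obstacle is purely bookkeeping: verifying that $\rho\mapsto\pi/2-\rho$ really exchanges the two $S^{2}$ models, tracking the transformation of the six affine parameters, and making the non-degeneracy $c\neq 0$ explicit. All substantive analytical work was already carried out inside the proof of Theorem~\ref{NTS} (specifically Case~(III), where the $b=0$, $a\neq 0$ sub-case forces the biharmonic equation to have no solution), so no new calculation of tension fields or curvature terms is needed here.
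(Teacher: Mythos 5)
Your proposal is correct and follows exactly the route the paper intends: the remark preceding these corollaries identifies the two sphere models via the isometry you use ($\rho\mapsto\pi/2-\rho$), after which the transported map $(\pi/2-2cr,\,(m-n)\theta)$ has $a=-2c\neq 0$ (an immersion forces $c\neq 0$), so the corollary to Theorem~\ref{NTS} excluding biharmonic maps with $a\neq 0$ applies, and non-harmonicity follows since harmonic maps are biharmonic. No gaps; this matches the paper's (implicit) argument.
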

\begin{corollary}
Let $X: T^2 \longrightarrow \r^3$ be an embedding with $X(r, \theta)=( a\sin r,\; (b+a\cos r)\cos \theta,\; (b+a\cos r)\sin \theta)$ with $b>a>0$. Then, the Gauss map $\varphi:(T^2,g_{T}=a^2dr^2+(b+a\cos r)^2d\theta^2) \longrightarrow (S^2,h=dr^2+\cos^2 rd\theta^2)$ with $\varphi(r,\theta)=(r,\theta)$ is neither harmonic nor biharmonic.
\end{corollary}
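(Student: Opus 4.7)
The plan is to reduce the statement directly to Theorem \ref{NTS} by a homothetic rescaling of the domain, and then to verify harmonicity fails by a single computation of the tension field via Lemma \ref{PL}.

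First I would normalize the domain metric. Setting $k=b/a$, the assumption $b>a>0$ gives $k>1$, and
\begin{equation}\notag
g_{T}=a^{2}dr^{2}+(b+a\cos r)^{2}d\theta^{2}=a^{2}\!\left[dr^{2}+(k+\cos r)^{2}d\theta^{2}\right].
\end{equation}
Since both the tension field and the bitension field transform by a nonzero constant factor under a constant rescaling of the domain metric, a map is harmonic (resp.\ biharmonic) with respect to $g_{T}$ if and only if it is harmonic (resp.\ biharmonic) with respect to $dr^{2}+(k+\cos r)^{2}d\theta^{2}$. This places us in the exact setting of Theorem \ref{NTS}, with coefficients $(a,b,c,m,n,l)=(1,0,0,0,1,0)$ in the notation of that theorem. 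Invoking Theorem \ref{NTS} reduces the problem to showing that the Gauss map $\varphi(r,\theta)=(r,\theta)$ fails to be harmonic.

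To see non-harmonicity, I would apply Lemma \ref{PL} with $\sigma(r)=k+\cos r$ and $\lambda(\rho)=\cos\rho$, for which $\sigma'(r)=-\sin r$ and $\lambda\lambda'(\rho)=-\sin\rho\cos\rho$. Substituting the above coefficients into the formula $\tau^{1}=a\sigma'/\sigma-(m^{2}+n^{2}/\sigma^{2})\lambda\lambda'(\rho)$ and using $\rho=r$ gives, after simplification,
\begin{equation}\notag
\tau^{1}=-\frac{\sin r}{k+\cos r}+\frac{\sin r\cos r}{(k+\cos r)^{2}}=-\frac{k\sin r}{(k+\cos r)^{2}}.
\end{equation}
Because $k>1$, this vanishes only where $\sin r=0$, so $\tau^{1}\not\equiv 0$ and the Gauss map is not harmonic.

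Combining the two steps, $\varphi$ is not harmonic, and by Theorem \ref{NTS} it is therefore not biharmonic. I do not expect any genuine obstacle: the homothety observation is immediate, the reduction to Theorem \ref{NTS} is essentially a matter of matching notation, and the only computation that must be carried out is the single evaluation of $\tau^{1}$ above. The one small point to be careful about is the clash of notation between the parameters $a,b$ of the embedding and the generic linear coefficients $a,b$ appearing in Theorem \ref{NTS}, which is why the reduction should be written explicitly with $k=b/a$.
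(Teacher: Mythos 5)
Your proposal is correct and follows essentially the same route as the paper: the paper also reduces the Gauss map to the setting of Theorem \ref{NTS} via the homothety $g_{T}=a^{2}\left[dr^{2}+(k+\cos r)^{2}d\theta^{2}\right]$ with $k=b/a>1$, noted at the start of Section 4, and your coefficient matching $(a,b,c,m,n,l)=(1,0,0,0,1,0)$ and the scaling behavior of $\tau$ and $\tau_{2}$ under constant rescaling are both correct. The only (harmless) difference is that you establish non-harmonicity by directly computing $\tau^{1}=-k\sin r/(k+\cos r)^{2}\not\equiv 0$ from Lemma \ref{PL}, whereas the paper obtains the same conclusion from Case (III) of the proof of Theorem \ref{NTS}, which shows that no map in this family with first coefficient nonzero is biharmonic (and hence none is harmonic either).
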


\end{document}